\newcommand{\comment}[1]{}
\def\Rset{\mathbb{R}}
\def\Nset{\mathbb{N}}
\def\cR{{\cal R}}
\def\bA{{\bar A}}
\def\bB{{\bar B}}
\def\bE{\bar{E}}
\def\bI{\bar{I}}
\def\bR{\bar{R}}
\def\bS{\bar{S}}
\definecolor{vert}{rgb}{0.06, 0.7, 0.6}   
\definecolor{mauve}{rgb}{0.6, 0.2, 0.99}   
\definecolor{bleu}{rgb}{0, 0.45, 1}   
\newtheorem{Def}{Definition}
\newtheorem{Thm}{Theorem}
\newtheorem{Lem}{Lemma}
\newtheorem{Rem}{Remark}
\newtheorem{Cor}{Corollary}
\title{\LARGE \bf
Modelling, Analysis, Observability and Identifiability of Epidemic Dynamics with Reinfections}
\author{Marcel Fang$^{1}$ and Pierre-Alexandre Bliman \Envelope$^{\, 1}$
\thanks{The authors acknowledge financial support from the MODCOV19 platform.}
\thanks{$^{1}$Marcel Fang and Pierre-Alexandre Bliman are with Sorbonne Universit\'e, Laboratoire Jacques-Louis Lions, CNRS and with Inria, France.
        {\tt\small  \{marcel.fang,pierre-alexandre.bliman\}@inria.fr }}
}
\begin{document}

\maketitle
\thispagestyle{empty}
\pagestyle{empty}

\begin{abstract}
We consider in this paper a general SEIRS model describing the dynamics of an infectious disease including latency, waning immunity and infection-induced mortality.
 We derive an infinite system of differential equations that provides an image of the same infection process, but counting also the reinfections.
 Existence and uniqueness of the corresponding Cauchy problem is established in a suitable space of sequence valued functions, and the asymptotic behavior of the solutions is characterized, according to the value of the basic reproduction number.
 This allows to determine several mean numbers of reinfections related to the population at endemic equilibrium.
We then show how using jointly measurement of the number of infected individuals and of the number of primo-infected provides observability and identifiability to a simple SIS model for which none of these two measures is sufficient to ensure on its own the same properties. 


\end{abstract}


\section{INTRODUCTION}

Since their introduction by Kermack and McKendrik in 1927 \cite{Kermack1927}, compartmental models have been massively used in mathematical epidemiology in order to study epidemic dynamics.
The obtained dynamical models may be analyzed and simulated, with parameter values 
estimated by fitting 
to observed data.
The inverse problem consisting of this estimation process is essential for realistic replication of the phenomenon.
It is thus important to look beforehand if the obtained parameter estimates are meaningful, 
and first of all whether perfect, error-free, measurement of the system actually contains information on the unknown parameters  --- in other terms whether the model is {\em identifiable} \cite{Jacquez:1985aa}.
Identifiability is only a recent topic in mathematical epidemiology, with few works addressing that issue.
A survey on this topic of has been recently published~\cite{Hamelin:2020aa}.
Xia and Moog \cite{Xia:2003ug} were among the first who considered this question, in a paper on an intra-host model of HIV.
Structural identifiability \cite{Jacquez:1985aa,Miao:2011tj} for the classical SIR and SEIR models, based on prevalence measurement, has been studied by Tuncer et al.\ \cite{Tuncer:2018up}.
One may also cite Evan et al.\ \cite{Evans:2005wj} who addressed the identifiability problem for a SIR model with seasonal forcing, with either prevalence or incidence measured; or \cite{Eisenberg:2013vn} for a SIR model for waterborne disease.

At the same time, 
the phenomena of reinfection, and particularly the counting of the number of reinfections, have been little studied to date.
Among the works addressing that question, 
Andreasen et al.\ \cite{Andreasen1997} and Abu-Raddad \& Ferguson \cite{Abu2005}  studied models with reinfections by different strains.
Arino et al.\ \cite{Arino2003} presented a SVIRS model in order to analyze the efficiency of vaccination. With the same goal, Gomes et al.\ \cite{Gomes2004} studied systematically different SIRS models with vaccination, partial and temporary immunity.
In addition, Katriel \cite{Katriel2010} highlighted a threshold condition for endemicity for a SIRS system borrowed from \cite{Gomes2004}.
This author also proposed in the same paper a modified SIRS system with an infinite set of differential equations capable of counting the number of reinfections --- to our knowledge, the only contribution made from this perspective. 

In the present article, we draw inspiration from Katriel's modelling approach, with the general aim of analyzing whether measuring the number of reinfections may provide more information for observability and identifiability
than the usual perspective, which only considers all the infections globally.
For this, we propose and analyze a SEIRS differential system with infinite number of equations that takes into account the reinfections.
The model is presented in Section \ref{se2} and its well-posedness is established in Section \ref{se3}.
The asymptotic convergence of the solutions is then studied in Section \ref{se4}, while quantities of interest related to the asymptotic mean numbers of reinfections at the endemic equilibrium are computed in Section \ref{se5}.
Simulations illustrating the behavior of the system are presented in Section \ref{se6}.
Finally, we present in Section \ref{se7} results that demonstrate how the supplementary information on the number of reinfections may render observable and identifiable a SIS model which otherwise possesses none of these properties.

\section{A SEIRS MODEL COUNTING REINFECTIONS}
\label{se2}

We begin by introducing the classical SEIRS system, presented e.g.~in the recent paper \cite{Bjornstad2020} by Bj\o rnstad et al.~and depicted in Figure \ref{fi1}.
It is written as follows :
\begin{subequations}
\label{eqn:SEIRS_global}
\begin{align}
\dot S &= b N -\beta S\frac{I}{N} + \omega R - \mu S,\\
\dot E &= \beta S\frac{I}{N} - (\sigma+\mu)E,\\
\dot I &= \sigma E - (\gamma+\mu+\nu)I,\\
\dot R &= \gamma I - (\omega+\mu) R,
\end{align}
\end{subequations}
where $N(t)=S(t)+E(t)+I(t)+R(t)$.
Here the variables $S(t), E(t), I(t), R(t)$ represent respectively the number of individuals that are {\em susceptible}, {\em exposed} to the disease after being infected, {\em infectious} and {\em recovered} and subject, at least transiently, to immunity.
The number $N(t)$ is the total population size.
All the model coefficients are nonnegative, with $b$ and $\mu$ representing the birth and natural mortality rates, while the other coefficients are characteristic of the considered disease.
The coefficient $\beta$ is the contact rate, and $\omega^{-1}$, $\sigma^{-1}$, $\gamma^{-1}$ correspond respectively to the period of immunity, 
the period of infection and the period of latency when the subject is infected but not yet contagious.
Last, the constant $\nu$ is the infection-induced mortality rate.

\begin{Rem}
\label{re1}
Note that generally speaking, the total population $N=S+E+I+R$ may vary, as
$\dot N = (b-\mu)N -\nu I$.
In particular, when $\nu=0$, the solution of \eqref{eqn:SEIRS_global} diverges, resp.~vanishes, as $t$ tends to $+\infty$ when $b>\mu$, resp.~$b<\mu$.
When $b=\mu$ and $\nu>0$, the variable $N$ may also vary.
\hfill $\square$
\end{Rem}

\begin{figure}[h!]
\centerline{\includegraphics[scale=0.32]{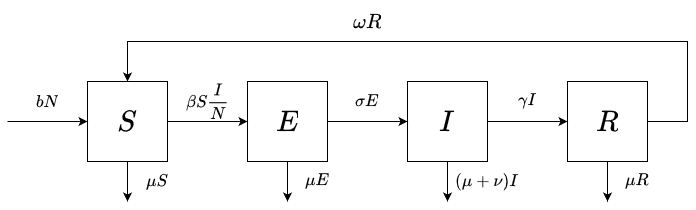}}
\caption{ Flowchart of system \eqref{eqn:SEIRS_global}}
\label{fi1}
\end{figure}

For non-permanent immunity, that is when $\omega>0$, individuals recovered become newly susceptible after healing.
We now want to account for these reinfections, by keeping track of their number.
Denoting, for any $i\in\Nset$, $E_i(t)$ the number of individuals exposed after having been ill $i-1$ times; $I_i(t)$ the number of individuals infected for the $i$-th time; $R_i(t)$ the number of individuals immune after having recovered $i$ times; and $S_i(t)$ the number of susceptible individuals that lost their transitory immunity after $i-1$ recoveries, we expand system \eqref{eqn:SEIRS_global} along the idea used in \cite{Katriel2010}, by dividing the compartments and adding an index associated to the number of reinfections.
The corresponding system is schematized on Figure \ref{fi2}, and the following equations are obtained:
\begin{subequations}
\label{eqn:SEIRS} 
\begin{align}
&\dot S_i = \omega R_{i-1} -\beta  S_{i}\frac{I}{N} - \mu S_i, \quad i\geq 1,\\
&\dot E_i = \beta S_{i}\frac{I}{N} - (\sigma+\mu)E_i, \quad i\geq 1,\\
&\dot I_i = \sigma E_i - (\gamma+\mu+\nu)I_i, \quad i\geq 1,\\
&\dot R_i = \gamma I_i - (\omega+\mu) R_i, \quad i\geq 1,
\end{align}
\end{subequations}
with 
$S(t) := \sum_{i\geq 1} S_i(t)$, $E(t) := \sum_{i\geq 1} E_i(t)$, $I(t) := \sum_{i\geq 1} I_i(t)$, $R(t) := \sum_{i\geq 1} R_i(t)$, $N(t) := S(t)+E(t)+I(t)+R(t)$.
Furthermore by convention one puts
\begin{equation*}
\omega R_0(t)=b N(t),
\end{equation*}
representing the recruitment term.
Finally, the initial condition for the Cauchy problem associated to \eqref{eqn:SEIRS} is given by the quantities $S_i(0)=S_i^0$, $E_i(0)=E_i^0$, $I_i(0)=I_i^0$, $R_i(0)=R_i^0$, for $i\geq 1$.
As depicted by the unwrapping of Figure \ref{fi1} in Figure \ref{fi2}, the central difference is that, after an $i$-th recovery, the individuals enter a new susceptible compartment $S_{i+1}$, instead of coming back to a unique reservoir $S$ as in \eqref{eqn:SEIRS_global}.

Notice that summing up the equations in \eqref{eqn:SEIRS} and the initial conditions, one formally recovers system \eqref{eqn:SEIRS_global}.
This property will be elucidated afterwards.
 In the sequel, we 
 call \eqref{eqn:SEIRS_global} the {\em macroscopic} system and \eqref{eqn:SEIRS} the {\em microscopic} one, as \eqref{eqn:SEIRS} disentangles the hidden reinfection structure of the former.
  
Let us now introduce some notations.
For $n\in \Nset$, define respectively by $X^n, X^n_+, X^n_{++}$ the spaces of sequences :
$\underbrace{l^1 \times \dots \times l^1}_{\text{$n$ times}}$, $\underbrace{l^1_+\times \dots \times l^1_+}_{\text{$n$ times}}$, $\underbrace{l^1_{++}\times \dots \times l^1_{++}}_{\text{$n$ times}}$,
where $l^1$ is the Banach space of summable sequence, $l^1_+\subset l^1$ the subspace of $l^1$ sequences of  nonnegative numbers and $l^1_{++}\subset l^1_+$ the subspace of sequences of positive numbers.
The space $X^n$ is endowed with the norm $\|x\|_{X^n}:=\sum_{1\leq i\leq n}\|x_i\|_{l^1}$ for  $x=(x_1,x_2,...,x_n)\in X^n$.
\begin{figure}[h!]
\centerline{\includegraphics[scale=0.30]{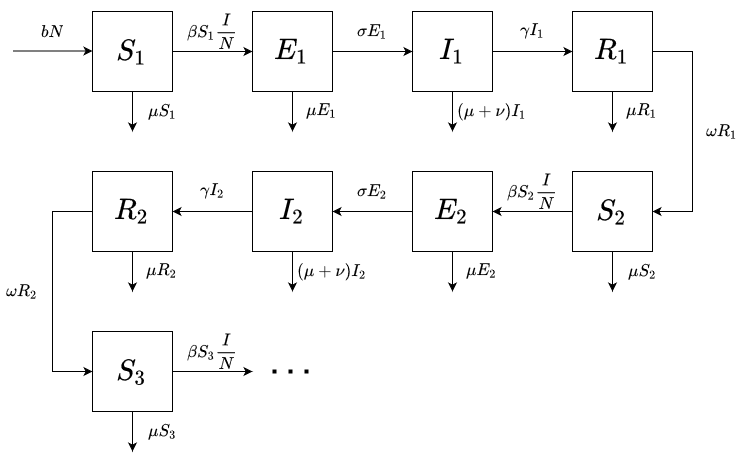}}
\caption{Flowchart of system \eqref{eqn:SEIRS}}
\label{fi2}
\end{figure}
We assume in the whole paper that the initial condition $(S^0_i, E^0_i, I_i^0, R_i^0)_{i\geq1}$ belongs to $X_+^4$ and moreover that the system contains initially some infected (without which the solution is trivial), that is
\begin{align}
\label{eq73}
\|I^0_i\|_{l^1}+\|E^0_i\|_{l^1}=\sum_{i\geq1} (I^0_i + E^0_i) > 0.
\end{align}

\section{WELL-POSEDNESS}
\label{se3}

Due to the infinite dimension of system \eqref{eqn:SEIRS}, proving its well-posedness is not completely evident.
We follow here  an approach employed for example for the study of the Becker-D\"oring system, see e.g.~\cite{Ball1986,Laurencot2002,Doumic2019}.
One first defines an adequate notion of solution for the Cauchy problem associated to \eqref{eqn:SEIRS}. 
\begin{Def}
\label{de1}
Let $0<T\leq \infty$ and $x_0:=(S_i^0,E_i^0, I_i^0,R_i^0)_{i\geq0}\in X^4_+$ verifying \eqref{eq73}.
We call {\em solution of \eqref{eqn:SEIRS} on $[0,T)$} any function $x:[0,T)\rightarrow X^4_+,~ t\mapsto x(t) :=(S_i(t),E_i(t),I_i(t),R_i(t))_{i\geq 1}$ such that :
\begin{enumerate}
    \item each function $S_i, E_i, I_i, R_i : [0,T)\rightarrow \Rset^+ \times \Rset^+$, $i\geq 1$, is continuous and $\sup_{t\in[0,T)} \| x(t) \|_{X^4}<\infty$,
    \item For all $t\in [0,T)$, $i\geq1$,
\vspace{-.3cm}
\begin{gather*} 
\hspace{-.5cm}
S_i(t)=S_i(0)+\int_0^t (\omega R_{i-1}(s)-(\beta \frac{I(s)}{N(s)}+\mu) S_i(s))ds, \nonumber\\
\hspace{-.5cm}
E_i(t)=E_i(0)+\int_0^t (\beta S_{i}(s)\frac{I(s)}{N(s)}-(\sigma+\mu) E_i(s))ds,\nonumber \\
\hspace{-.5cm}
I_i(t)=I_i(0)+\int_0^t(\sigma E_i(s)-(\mu+\gamma+\nu)I_i(s))ds,\nonumber\\
\hspace{-.3cm}
R_i(t)=R_i(0)+\int_0^t (\gamma I_i(s)-(\omega+\mu) R_i(s))ds. \nonumber
\qquad \hfil\square
\end{gather*}
\end{enumerate}
\end{Def}

Notice that the condition $\sup_{t\in[0,T)} \| x(t) \|_{X^4}<\infty$ implies that the functions $I$, and $N$ are bounded and by definition we have $I\leq N$.
Then by Lebesgue's dominated convergence theorem, the quantities $I$, $N$ and $\frac{I}{N}$ are integrable and the integral formulas are well defined.

The following result establishes the well-posedness of system \eqref{eqn:SEIRS}.
\begin{Thm}
\label{th6}
For all $x_0\in X^4_+$ verifying \eqref{eq73} and $ T \in (0,+\infty)$, there exists a unique solution $x=(S_i,E_i, I_i, R_i)_{i\geq 1}$ of \eqref{eqn:SEIRS} on $[0,T)$.
Moreover,
\begin{enumerate}
\item
$x(t)\in X^4_{++}$ for all $t\in (0,T)$;
\item
$(S, E, I, R)$ is continuously differentiable on  $(0,T)$ and fulfils \eqref{eqn:SEIRS_global}, where $S := \sum_{i\geq 1} S_i$, $E := \sum_{i\geq 1} E_i$, $I := \sum_{i\geq 1} I_i$, $R := \sum_{i\geq 1} R_i$;
\item
for all $i\geq 1$, $(S_i, E_i, I_i, R_i)$ is continuously differentiable and fulfils \eqref{eqn:SEIRS} on $(0,T)$.
    \hfill $\square$
\end{enumerate}
\end{Thm}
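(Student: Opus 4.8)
The plan is to follow the truncation-and-compactness strategy that is standard for Becker--D\"oring-type systems. First I would introduce, for each $n\geq 1$, a finite-dimensional truncation of \eqref{eqn:SEIRS}: keep only the indices $1\leq i\leq n$, set all higher components to zero, and replace the aggregate quantities by their partial sums $I^{(n)}:=\sum_{i=1}^n I_i$ and $N^{(n)}:=\sum_{i=1}^n(S_i+E_i+I_i+R_i)$, with the recruitment term read as $\omega R_0=bN^{(n)}$. For $n$ large enough, \eqref{eq73} guarantees $N^{(n)}(0)>0$, so the right-hand side is smooth on the region $\{N^{(n)}>0\}$ and the Cauchy--Lipschitz theorem yields a unique local solution $x^{(n)}$.

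Next I would establish positivity and uniform a priori bounds for the truncated solutions. Invariance of the nonnegative orthant follows from the quasi-positive structure of \eqref{eqn:SEIRS}: on each face $\{S_i=0\}$, $\{E_i=0\}$, $\{I_i=0\}$ or $\{R_i=0\}$ the corresponding derivative is a nonnegative combination of the remaining (nonnegative) variables, so the flow points inward. Since the $X^4$-norm coincides, on nonnegative sequences, with the total population, summing the truncated equations gives $\dot N^{(n)}=(b-\mu)N^{(n)}-\nu I^{(n)}$ exactly as in Remark \ref{re1}; together with $0\leq I^{(n)}\leq N^{(n)}$ this furnishes both an upper bound $N^{(n)}(t)\leq N^{(n)}(0)e^{(b-\mu)_+t}$ and a strictly positive lower bound $N^{(n)}(t)\geq N^{(n)}(0)e^{(b-\mu-\nu)t}$ on $[0,T]$, uniform in $n$. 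These make $I^{(n)}/N^{(n)}$ well defined and bounded, give global existence of $x^{(n)}$ on $[0,T]$, and bound $\|x^{(n)}(t)\|_{X^4}$ together with every component's derivative. By Arzel\`a--Ascoli and a diagonal extraction over $i$ I then obtain a subsequence converging uniformly, componentwise, to a nonnegative continuous limit $x$.

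The delicate point, which I expect to be the main obstacle, is passing to the limit inside the nonlocal nonlinearity $S_i\,I/N$: componentwise convergence does not by itself control the aggregates $I^{(n)}$ and $N^{(n)}$, and one must rule out escape of mass towards large reinfection indices. I would therefore establish uniform tightness of the reinfection mass, namely that $\sum_{i>k}(S^{(n)}_i+E^{(n)}_i+I^{(n)}_i+R^{(n)}_i)$ tends to $0$ as $k\to\infty$, uniformly in $n$ and in $t\in[0,T]$. The mechanism is that reaching index $k$ requires $k-1$ successive reinfections, so that over the finite horizon $[0,T]$ the transferred mass decays factorially in $k$; iterating the flux estimates through the cascade makes this quantitative. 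Tightness yields $I^{(n)}\to I$ and $N^{(n)}\to N$ uniformly, with $N$ inheriting the exponential lower bound, after which dominated convergence lets me pass to the limit in the integral formulation of Definition \ref{de1} and conclude that $x$ solves \eqref{eqn:SEIRS} on $[0,T)$; as $T<\infty$ is arbitrary, this covers every finite horizon.

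Uniqueness would come from a Gr\"onwall argument: for two solutions $x,\tilde x$, I estimate $\|x(t)-\tilde x(t)\|_{X^4}$ from the integral equations, the only nontrivial contribution being the difference of the $I/N$ factors, which is Lipschitz thanks to the uniform upper bound on $I$ and the lower bound on $N$; the resulting linear integral inequality forces $x=\tilde x$. Finally, the remaining assertions follow quickly. Continuity of the integrands in Definition \ref{de1} upgrades each $x_i$ to $C^1$ satisfying \eqref{eqn:SEIRS} pointwise (item 3), and summing these identities --- legitimate by the uniform convergence already obtained --- yields the macroscopic system \eqref{eqn:SEIRS_global} with $(S,E,I,R)\in C^1$ (item 2). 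For the strict positivity $x(t)\in X^4_{++}$ (item 1), I would use the variation-of-constants form of each equation to bootstrap positivity along the cascade $R_0=bN/\omega\to S_1\to E_1\to I_1\to R_1\to S_2\to\cdots$, starting from $N>0$ on $(0,T)$ and the initial presence of infection guaranteed by \eqref{eq73}.
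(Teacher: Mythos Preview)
Your argument is correct in outline and would establish the theorem, but the route is genuinely different from the paper's. The paper does \emph{not} truncate the infinite system and pass to the limit via compactness and tightness. Instead it exploits the decisive structural feature of \eqref{eqn:SEIRS}: the only coupling between levels is through the \emph{macroscopic} ratio $I/N$. The authors therefore first solve the four-dimensional system \eqref{eqn:SEIRS_global} for $(s,e,\imath,r)$, then feed $\imath/n$ into \eqref{eqn:SEIRS}, which becomes a \emph{triangular linear} cascade that is solved level by level; a comparison argument and Dini's theorem show that the partial sums of this explicit solution converge back to $(s,e,\imath,r)$. Uniqueness is then immediate, since Lemma~\ref{le5} forces any solution's aggregates to satisfy \eqref{eqn:SEIRS_global}, after which the microscopic components are determined by the same triangular linear system --- no Gr\"onwall estimate on $X^4$ is needed. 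Your Becker--D\"oring machinery (truncation, tightness to prevent escape of mass, diagonal extraction, Gr\"onwall uniqueness) is the right tool when the nonlinearity genuinely involves infinitely many components, but here it is heavier than necessary: the tightness step, which you correctly flag as the delicate one, is bypassed entirely in the paper's approach. One minor slip: summing your truncated equations gives $\dot N^{(n)}=(b-\mu)N^{(n)}-\nu I^{(n)}-\omega R_n^{(n)}$, not the clean identity of Remark~\ref{re1}, because mass leaks out of level $n$; this is harmless for your bounds (absorb $\omega R_n\leq \omega N^{(n)}$ into the exponential lower bound), but worth noting.
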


Notice that, due to its triangular structure, the truncation of system \eqref{eqn:SEIRS} obtained by considering equation \eqref{eqn:SEIRS_global} together with the equations in \eqref{eqn:SEIRS} for $i=1,\dots,n$, for a given $n\in\Nset$, yields an {\em exact} computation of all these variables.
This is used in the simulations presented in Section \ref{se6}.

\begin{proof}
The proof is based on the successive use of the following lemmas.
\begin{Lem}\label{le4}
For all $T\in(0,\infty)$, there exists a solution  $x=(S_i,E_i,I_i,R_i)_{i\geq 1}$ of \eqref{eqn:SEIRS} on $[0,T)$, with values in $X^4_+$.
For this solution, $(S, E, I, R)$ fulfils \eqref{eqn:SEIRS_global}, and $S(t), E(t), I(t), R(t)>0$ for all $t\in (0,T)$.
\hfill $\square$
\end{Lem}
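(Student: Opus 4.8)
The plan is to exploit the triangular structure of \eqref{eqn:SEIRS} to reduce the construction to a closed finite-dimensional problem followed by a cascade of linear problems, which streamlines the Becker--Döring truncation approach referenced above. The key observation is that the macroscopic system \eqref{eqn:SEIRS_global} is self-contained: its right-hand side does not reference the indexed variables. First I would solve \eqref{eqn:SEIRS_global} on $[0,T)$ with initial data $\big(\sum_i S_i^0,\sum_i E_i^0,\sum_i I_i^0,\sum_i R_i^0\big)\in\Rset_+^4$. The only non-Lipschitz feature is the factor $I/N$, so I would first secure $N>0$: from Remark \ref{re1}, $\dot N=(b-\mu)N-\nu I$ together with $0\le I\le N$ gives $-(\mu+\nu)N\le\dot N\le(b-\mu)N$, hence $N$ stays in a fixed positive and bounded band on the finite interval $[0,T)$ as soon as $N(0)>0$, which holds by \eqref{eq73}. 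On the region $\{N>0\}$ the field is locally Lipschitz, so Cauchy--Lipschitz yields a unique solution $(S,E,I,R)$, nonnegative by quasi-positivity (each boundary face $\{S=0\},\dots,\{R=0\}$ carries an inward-pointing field), and global on $[0,T)$ thanks to the bound on $N$.

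With $(S,E,I,R)$ in hand, the functions $c(t):=\beta I(t)/N(t)\in[0,\beta]$ and the recruitment $\omega R_0(t)=bN(t)$ are known, continuous and bounded, and \eqref{eqn:SEIRS} becomes, level by level, a \emph{linear} non-autonomous cascade: given $R_{i-1}$ from the previous level, the block $(S_i,E_i,I_i,R_i)$ solves a linear system with bounded continuous coefficients and nonnegative inflow, hence admits a unique nonnegative global solution on $[0,T)$. Solving recursively for $i=1,2,\dots$ produces all components. To see that they assemble into an element of $X^4_+$, I would sum the per-level identity $\dot T_i=\omega R_{i-1}-\omega R_i-\mu T_i-\nu I_i$ (with $T_i:=S_i+E_i+I_i+R_i$) over $i\le n$; the telescoping gives $\frac{d}{dt}\sum_{i=1}^n T_i\le bN-\mu\sum_{i=1}^n T_i$, so the partial sums are bounded uniformly in $n$ and $t\in[0,T)$ by $\|x_0\|_{X^4}$ plus a constant depending on $T$. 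As the $T_i$ are nonnegative the partial sums increase in $n$, so monotone convergence yields $\sum_i T_i(t)<\infty$ with $\sup_{[0,T)}\|x(t)\|_{X^4}<\infty$. It is precisely here that the delicate tightness step of the Becker--Döring method --- excluding escape of mass towards infinite reinfection index --- would otherwise arise; the triangular structure and the resulting monotonicity of the partial sums let us replace it by the elementary bound above, and I expect this passage to the infinite sum to be the point demanding the most care. Finally, term-by-term differentiation of $\sum_i S_i,\dots,\sum_i R_i$ (legitimate since $\sum_i|\dot T_i|$ is bounded on compact subintervals) shows that $\big(\sum_i S_i,\dots,\sum_i R_i\big)$ satisfies the \emph{same} linear system, with coefficient $c$ and the same initial data, as $(S,E,I,R)$; uniqueness for linear ODEs then forces $\sum_i S_i=S,\dots,\sum_i R_i=R$, so the construction is self-consistent and $(S,E,I,R)$ indeed fulfils \eqref{eqn:SEIRS_global}.

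It remains to upgrade nonnegativity to strict positivity of $S,E,I,R$ on $(0,T)$. I would argue on the macroscopic system by propagating the seed along the cycle $E\to I\to R\to S\to E$: writing each equation as $\dot y+a(t)y=q(t)$ with $a$ bounded and $q\ge0$ the inflow, one gets $y(t)=e^{-\int_0^t a}\big(y(0)+\int_0^t e^{\int_0^s a}q(s)\,ds\big)$, so $y$ is strictly positive for $t>0$ as soon as either $y(0)>0$ or $q$ is positive on a subinterval. Starting from \eqref{eq73}, which forces $E(0)+I(0)>0$, and using $N>0$ and the positivity of the transition rates, one checks that positivity of any one compartment triggers, after an arbitrarily short time, positivity of the next in the cycle, hence of all four for every $t\in(0,T)$. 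The same conclusion transfers to the microscopic level solutions, and the integral identities of Definition \ref{de1} hold because each level solves its ODE classically, completing the proof of Lemma \ref{le4}.
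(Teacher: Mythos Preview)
Your construction is exactly the paper's: solve the macroscopic system \eqref{eqn:SEIRS_global} first, feed the resulting $c(t)=\beta I/N$ into \eqref{eqn:SEIRS} to obtain a triangular linear cascade, build the components recursively, then show the componentwise sums coincide with the macroscopic solution. The positivity argument via the variation-of-constants formula is also in the same spirit.

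The one place where your write-up is thinner than the paper is precisely the step you flag as ``demanding the most care'': the passage from uniform bounds on the partial sums to term-by-term differentiation. You justify interchanging $\sum_i$ and $\frac{d}{dt}$ by saying ``$\sum_i|\dot T_i|$ is bounded on compact subintervals'', but a uniform bound on the partial sums of the derivatives is not the same as their uniform convergence, which is what the standard differentiation theorem requires. The paper closes this gap in two moves: first, the uniform bound on the (monotone) partial sums $s_n,e_n,\imath_n,r_n$ gives equicontinuity, so Arzel\`a--Ascoli yields continuity of the limits $S,E,I,R$; second, Dini's theorem (monotone convergence of continuous functions to a continuous limit on a compact set) upgrades the pointwise convergence to uniform convergence, and the same device applied to the derivative series then legitimates the interchange. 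Your telescoping estimate on $\sum_{i\le n}T_i$ is a clean substitute for the paper's componentwise comparison $s_n\le s$, etc., but it does not by itself bypass the Dini/Arzel\`a--Ascoli step.
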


\begin{Lem}
\label{le5}
Every solution of \eqref{eqn:SEIRS} on $[0,T)$ is such that $(S,E,I,R)$ fulfils \eqref{eqn:SEIRS_global} everywhere on $[0,T)$.
\hfill $\square$
\end{Lem}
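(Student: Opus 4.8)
The plan is to start from the integral characterization of a solution in Definition \ref{de1} and simply sum the four families of scalar integral equations over the reinfection index $i\geq 1$, identify the aggregated quantities $S,E,I,R$, and exploit the telescoping that produces the recruitment term. Note first that $x(t)\in X^4_+$ means each component sequence lies in $l^1$, so $S(t)=\sum_{i\geq1}S_i(t)$, and likewise $E,I,R$, is a well-defined finite number for every $t$.

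Fixing $t\in[0,T)$ and summing each integral equation over $i$ from $1$ to $\infty$, the exposed, infectious and recovered compartments are immediate once the summation--integration interchange is justified: their right-hand sides contain, besides loss terms, only gain terms proportional to $S_i$, $E_i$ and $I_i$, and summing reproduces exactly the integral forms of the $\dot E$, $\dot I$, $\dot R$ equations of \eqref{eqn:SEIRS_global}. The only nontrivial bookkeeping is in the susceptible equation, whose gain term $\omega R_{i-1}$ telescopes as $\sum_{i\geq1}\omega R_{i-1}=\omega R_0+\omega\sum_{j\geq1}R_j=bN+\omega R$, using the convention $\omega R_0=bN$; this recovers the recruitment-plus-waning term of the macroscopic $\dot S$ equation.

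The main obstacle, and the only genuinely analytic point, is the justification of exchanging the infinite sum with the time integral. I would handle it by splitting each integrand into its nonnegative gain and loss parts and applying Tonelli's theorem to each nonnegative series separately. Concretely, $\sum_{i\geq1}\int_0^t\omega R_{i-1}\,ds=\int_0^t(bN+\omega R)\,ds$ and $\sum_{i\geq1}\int_0^t(\beta\tfrac{I}{N}+\mu)S_i\,ds=\int_0^t(\beta\tfrac{I}{N}+\mu)S\,ds$, and similarly for the other compartments; each of these is finite because the bound $\sup_{t\in[0,T)}\|x(t)\|_{X^4}<\infty$, together with $I\leq N$ and $\tfrac{I}{N}\leq 1$, makes all the summed functions bounded on the finite subinterval $[0,t]$. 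Having both the gain and loss series finite rules out any $\infty-\infty$ ambiguity and lets me recombine them into the aggregated integrands. This is precisely where the $l^1$ framework and the uniform norm bound built into Definition \ref{de1} are essential.

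At this stage each of $S,E,I,R$ is written as its initial value plus the integral of a bounded measurable function, hence is Lipschitz on $[0,T)$, in particular continuous; consequently $N=S+E+I+R$ is continuous and, $N$ being positive as presupposed by the solution concept, the integrands $bN+\omega R-\beta S\tfrac{I}{N}-\mu S$ and their analogues are themselves continuous. A final application of the fundamental theorem of calculus upgrades $S,E,I,R$ to $C^1$ functions satisfying \eqref{eqn:SEIRS_global} pointwise everywhere on $[0,T)$, which is the claim. I expect this bootstrap from ``integral identity with bounded integrand'' to ``pointwise differential equation'' to be routine, the crux remaining the Tonelli-based interchange above.
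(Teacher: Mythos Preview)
Your argument is correct and takes a genuinely different route from the paper's own proof. The paper proceeds by first establishing \emph{uniform} convergence of the partial sums $s_n,e_n,\imath_n,r_n$: the uniform bound $\sup_t\|x(t)\|_{X^4}<\infty$ gives equicontinuity, Arzel\`a--Ascoli yields continuity of the pointwise limits, and then Dini's theorem upgrades the monotone pointwise convergence to uniform convergence; from there the paper also obtains uniform convergence of the \emph{derivatives} $\dot s_n,\dot e_n,\dot\imath_n,\dot r_n$ and interchanges summation with differentiation directly. You instead stay entirely in the integral formulation, split each integrand into its nonnegative gain and loss parts, and invoke Tonelli to swap $\sum_{i}$ with $\int_0^t$ term by term, then bootstrap to $C^1$ via the fundamental theorem of calculus once the aggregated integrands are seen to be continuous. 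Both routes rely essentially on the same two ingredients---nonnegativity of every compartment and the uniform $l^1$ bound---but your Tonelli argument is more economical for the present lemma, since it bypasses the Arzel\`a--Ascoli/Dini machinery altogether. The paper's approach, on the other hand, produces as a by-product the uniform convergence of the partial sums and of their derivatives, which it reuses elsewhere (notably in Lemma~\ref{le4}). One small point you leave implicit is the strict positivity of $N$ on $[0,T)$, which you describe as ``presupposed by the solution concept''; it can be made explicit by noting that the aggregated integral identities give $N(t)=N(0)+\int_0^t\big((b-\mu)N-\nu I\big)\,ds$ with $0\le I\le N$, so $N$ is Lipschitz and bounded below by $N(0)e^{-(\mu+\nu)t}>0$.
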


\begin{Lem}
\label{le6}
Let $x=(S_i, E_i, I_i, R_i)_{i\geq 1}$, $x'=(S'_i, E'_i,I'_i, R'_i)_{i\geq 1}$ be two solutions of \eqref{eqn:SEIRS} on $[0,T)$ with values in $X_+^4$, then $x=x'$.
\hfill $\square$
\end{Lem}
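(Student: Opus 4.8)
The plan is to reduce this infinite-dimensional uniqueness question to a finite-dimensional uniqueness statement for the macroscopic system, and then to a cascade of scalar Gronwall estimates that exploit the one-directional coupling $R_{i-1}\to S_i\to E_i\to I_i\to R_i$ in \eqref{eqn:SEIRS}. Since both $x$ and $x'$ are solutions in the sense of Definition \ref{de1}, they share the same initial datum $x_0$, and by Lemma \ref{le5} both aggregates $(S,E,I,R)$ and $(S',E',I',R')$ satisfy the macroscopic system \eqref{eqn:SEIRS_global} on $[0,T)$ with the same initial value $(S^0,E^0,I^0,R^0)$, where $S^0=\sum_{i\geq1}S_i^0$, and similarly for the other components.

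First I would establish uniqueness for the macroscopic system. The only non-Lipschitz feature of the right-hand side of \eqref{eqn:SEIRS_global} is the incidence term $\beta S I/N$, which is singular at $N=0$. To tame it I would derive a positive lower bound on $N$: from $\dot N=(b-\mu)N-\nu I$ and $0\leq I\leq N$ one gets $\dot N\geq(b-\mu-\nu)N$, hence $N(t)\geq N(0)e^{(b-\mu-\nu)t}$; since \eqref{eq73} forces $N(0)\geq I(0)+E(0)>0$, the quantity $N$ (and likewise $N'$) stays bounded below by a positive constant on every compact subinterval $[0,T']\subset[0,T)$. On the region where $N\geq m>0$ and the coordinates remain bounded (which holds because $\sup_t\|x(t)\|_{X^4}<\infty$), the right-hand side of \eqref{eqn:SEIRS_global} is Lipschitz, so the Cauchy--Lipschitz theorem gives $(S,E,I,R)=(S',E',I',R')$ on each $[0,T']$, hence on all of $[0,T)$.

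The crucial consequence is that the force of infection $\lambda:=\beta I/N$ and the recruitment term $\omega R_0=bN$ now coincide for the two solutions. With respect to the microscopic unknowns, system \eqref{eqn:SEIRS} thereby becomes \emph{linear}, with the time-dependent coefficient $\lambda$ bounded by $\beta$. Subtracting the integral equations of Definition \ref{de1} at a fixed index $i$ and using $\lambda=\lambda'$, so that $\beta S_iI/N-\beta S_i'I'/N'=\lambda\,(S_i-S_i')$, yields a closed linear integral system for $(S_i-S_i',E_i-E_i',I_i-I_i',R_i-R_i')$ forced only by $\omega(R_{i-1}-R_{i-1}')$. I then argue by induction on $i$: the base forcing is $\omega(R_0-R_0')=b(N-N')=0$, and the inductive forcing $\omega(R_{i-1}-R_{i-1}')$ vanishes by hypothesis. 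In either case, setting $w(t):=|S_i-S_i'|+|E_i-E_i'|+|I_i-I_i'|+|R_i-R_i'|$ and using the triangle inequality together with the boundedness of all coefficients gives $w(t)\leq C\int_0^t w(s)\,ds$, whence Gronwall's inequality forces $w\equiv0$. This delivers $R_i=R_i'$, closing the induction and giving $x=x'$.

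I expect the main obstacle to be the macroscopic step, specifically securing the uniform positive lower bound on $N$ that renders the singular incidence term Lipschitz; once $\lambda$ and the recruitment are pinned down, the per-level Gronwall estimates are routine thanks to the triangular coupling.
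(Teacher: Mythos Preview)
Your proposal is correct and follows essentially the same route as the paper: invoke Lemma~\ref{le5} to force the two aggregates to coincide as solutions of the finite-dimensional system~\eqref{eqn:SEIRS_global}, and then exploit the resulting triangular linear structure of~\eqref{eqn:SEIRS} (with the common force of infection $\lambda=\beta I/N$) to conclude level by level. The paper merely sketches this last step by pointing back to the triangular system in the proof of Lemma~\ref{le4}, whereas you spell out the lower bound on $N$ needed for macroscopic Cauchy--Lipschitz uniqueness and the inductive Gronwall argument, but the underlying strategy is the same.
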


\begin{Lem}
\label{le7}
Let $x=(S_i, E_i, I_i, R_i)_{i\geq 1}$ be a solution of \eqref{eqn:SEIRS} on $[0,T)$, then $x(t)\in X^4_{++}$ for all $t>0$ and fulfils the differential equations \eqref{eqn:SEIRS} everywhere on $(0,T)$.
\hfill $\square$
\end{Lem}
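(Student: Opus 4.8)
The plan is to split the statement into its two assertions — the differentiability together with the pointwise validity of \eqref{eqn:SEIRS}, and the strict positivity $x(t)\in X^4_{++}$ — and to treat them in this order, since the first produces the very differential equations that drive the positivity argument.

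First I would upgrade the integral identities of Definition \ref{de1} into genuine differential equations. By Lemma \ref{le5} the macroscopic variables $(S,E,I,R)$ satisfy \eqref{eqn:SEIRS_global}, so $I$ and $N$ are continuous on $[0,T)$; moreover $N(0)>0$ by \eqref{eq73}, and the balance $\dot N=(b-\mu)N-\nu I$ combined with $0\le I\le N$ gives $N(t)\ge N(0)e^{(b-\mu-\nu)t}>0$, so that $t\mapsto I(t)/N(t)$ is continuous on $[0,T)$. Each integrand appearing in Definition \ref{de1} is then a continuous function of $t$, being a finite combination of the continuous coordinates $R_{i-1},S_i,E_i,I_i$ (with the convention $\omega R_0=bN$) and of $I/N$. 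The fundamental theorem of calculus therefore shows that every $S_i,E_i,I_i,R_i$ is $C^1$ on $(0,T)$ and satisfies \eqref{eqn:SEIRS} pointwise, which settles the second assertion.

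For the positivity I would exploit the triangular ``cascade'' structure: block $i$ is fed only from $R_{i-1}$ into $S_i$, and inside the block the flow runs $S_i\to E_i\to I_i\to R_i$. Each coordinate $y\in\{S_i,E_i,I_i,R_i\}$ obeys a scalar linear equation $\dot y=q(t)-c(t)y$ with $c\ge 0$ continuous and a nonnegative continuous source $q$, whence by variation of constants $y(t)=y(0)e^{-\int_0^t c}+\int_0^t q(s)e^{-\int_s^t c}\,ds$, which is strictly positive for every $t>0$ as soon as $q>0$ on a subinterval of $(0,t)$. Invoking Lemma \ref{le4} together with the uniqueness Lemma \ref{le6}, one first records that $S,E,I,R>0$ on $(0,T)$, hence $I/N>0$ there. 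The induction on $i$ then runs: the base source $\omega R_0=bN$ is positive, so $S_1>0$ on $(0,T)$; positivity of $S_1$ and of $I/N$ makes the source $\beta S_1\,I/N$ of $E_1$ positive, so $E_1>0$; then $\sigma E_1>0$ yields $I_1>0$, and $\gamma I_1>0$ yields $R_1>0$, which is exactly the hypothesis required to start block $i=2$. Since every coordinate is thus positive for $t>0$ and each sequence is summable by the standing bound $\sup_t\|x(t)\|_{X^4}<\infty$, we conclude $x(t)\in X^4_{++}$.

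The delicate point is the non-degeneracy of this cascade, which is where I expect the real difficulty to sit. The induction needs every transfer rate to be active, i.e.\ the natural standing assumptions $b,\beta,\sigma,\gamma,\omega>0$; if, for instance, $b=0$ then $S_1$ receives no recruitment, and an initial datum supported on high indices — still admissible under \eqref{eq73} — leaves the low-index compartments identically zero, so strict positivity would genuinely fail. Thus the obstacle is structural rather than analytic: one must make these hypotheses explicit and check that under them no link of the chain $R_0\to S_1\to E_1\to I_1\to R_1\to S_2\to\cdots$ is broken. The remaining verifications — continuity of the sums $I$ and $N$ and the justification of the variation-of-constants formula — are routine consequences of the uniform $X^4$ bound.
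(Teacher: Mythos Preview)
Your proposal is correct and follows essentially the same route as the paper: differentiability is obtained exactly as the paper does, by invoking Lemma~\ref{le5} so that $I/N$ is continuous and the integrands in Definition~\ref{de1} may be differentiated; and positivity is obtained from the monotone/positive structure of \eqref{eqn:SEIRS}, which the paper dispatches by citing the general theory \cite{Smith1995,Farina2000} while you unfold it by hand via the cascade $R_{i-1}\to S_i\to E_i\to I_i\to R_i$ and variation of constants. Your added remark that the strict-positivity conclusion requires $b,\beta,\sigma,\gamma,\omega>0$ is a genuine observation that the paper leaves implicit.
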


Due to space limitation, we only provide a sketch of the proofs.
Lemma \ref{le4} is proved by constructing explicitly a solution, obtained as solution of the {\em triangular} linear system
\begin{gather*}
    \dot S_i = \omega R_{i-1} -\beta  S_{i}\frac{\imath}{n} - \mu S_i, \quad i\geq 1,\\
    \dot E_i = \beta S_{i}\frac{\imath}{n} - (\sigma+\mu)E_i, \quad i\geq 1,\\
    \dot I_i = \sigma E_i - (\gamma+\mu+\nu)I_i, \quad i\geq 1,\\
    \dot R_i = \gamma I_i - (\omega+\mu) R_i, \quad i\geq 1,
\end{gather*}
with the same initial condition as in \eqref{eqn:SEIRS}, where $\imath$ and $n:=s+e+\imath+r$ come from the unique solution $(s,e,\imath, r)$ of system \eqref{eqn:SEIRS_global} on $[0,T)$, with initial condition
\[
s^0:=\sum_{i\geq1}S^0_i,e^0:=\sum_{i\geq1}E^0_i,\imath^0:=\sum_{i\geq1}I^0_i,r^0:=\sum_{i\geq1}R^0_i.
\]

To show that the function $(s,e,\imath,r)$ constructed in this way is identical to the series $(\sum_iS_i$, $\sum_iE_i$, $\sum_iI_i$, $\sum_iR_i)$ (denoted $(S,E,I,R)$ in the sequel), one first proves, by applying a comparison result, that, for any $t\in (0,T)$,
\begin{gather*}
0 \leq s_n(t):=\sum_{i=1}^nS_i(t)\leq s(t),\ 0 \leq e_n(t):=\sum_{i=1}^nE_i(t)\leq e(t), \\
0 \leq  \imath_n(t):=\sum_{i=1}^nI_i(t)\leq \imath(t),\ 0 \leq r_n(t):=\sum_{i=1}^nR_i(t)\leq r(t).
\end{gather*}
This provides pointwise convergence of the partial sums $\{s_n(t)\}, \{e_n(t)\}, \{\imath_n(t)\}, \{r_n(t)\}$, towards limits denoted $S(t),E(t),I(t),R(t)$.

Uniform boundedness in the previous identities yields equicontinuity of the previous sequences, from which ones deduces, thanks to Arzelà–Ascoli theorem \cite{Brezis2010}, the continuity of the limits $S(t),E(t),I(t),R(t)$.
Using Dini's theorem \cite{Bartle2000}, the convergence of these positive series is indeed uniform, as well as the convergence of their derivatives $\{\dot s_n(t)\}, \{\dot e_n(t)\}, \{\dot \imath_n(t)\}, \{\dot r_n(t)\}$, and this allows to interchange summation and differentiation.
This shows that $(S,E,I,R) = (s, e, \imath, r)$, and that $(S_i,E_i,I_i,R_i)$ is a solution in the sense of Definition \ref{de1}.

To show Lemma \ref{le5}, let $x=(S_i, E_i, I_i, R_i)_{i\geq 1}$ be solution of \eqref{eqn:SEIRS}.
Using the bound
$\sup\{\| x(t) \|_{X^4}\ :\ t\in[0,T)\} <\infty$
yields equicontinuity, and arguments similar to those developed in the proof of Lemma \ref{le4} shows that  the partial sums $s_n, e_n, \imath_n$ and $r_n$ defined as previously converge uniformly to continuous limit functions.
The integrands in the formula of the quantities $S_i,E_i,I_i, R_i$ are then continuous, and this allows to differentiate these quantities, yielding \eqref{eqn:SEIRS}.
Using now the uniform convergence of the series obtained previously and summing up the equations of \eqref{eqn:SEIRS}, we deduce again the uniform convergence of the derivatives $\dot s_n, \dot e_n, \dot \imath_n, \dot r_n$.
Interchanging summation and differentiation is then possible which ensures that $(\dot S, \dot E, \dot I, \dot R)$ fulfils \eqref{eqn:SEIRS_global}.


Notice that, once the solution of \eqref{eqn:SEIRS_global} has been obtained, the functions $S_i, E_i, R_i, I_i$, $i\geq 1$, are deduced in the same unique way as the solution of the triangular linear system in the proof of Lemma \ref{le4}.
This establishes uniqueness, and proves Lemma \ref{le6}.

Due to the regularity of $(S,E,I,R)$ provided by Lemma \ref{le5}, the same point of view shows that for any solution of \eqref{eqn:SEIRS} (in the sense of Definition \ref{de1}), one may differentiate the integral formulas in Definition \ref{de1}, thus demonstrating that $x$ verifies the differential equations \eqref{eqn:SEIRS} everywhere on $(0,T)$, as stated in Lemma \ref{le7}.


Last, positivity of the solution is obtained by using monotonicity (or positivity) of system \eqref{eqn:SEIRS}, see \cite{Smith1995,Farina2000}, achieving the proof of Lemma \ref{le7}.
Using these four lemmas allows finally to establish Theorem \ref{th6}.  
\end{proof}

\section{ASYMPTOTIC BEHAVIOR}
\label{se4}

The existence and uniqueness of the solution being demonstrated, let us focus on its asymptotic behavior, in order to unravel the underlying structure of population (in terms of numbers of reinfections) at the endemic equilibrium.
Considering the fact that the solution of system \eqref{eqn:SEIRS_global}, and also system \eqref{eqn:SEIRS}, may diverge as $t$ tends to $+\infty$ (see Remark~\ref{re1}),
we first normalize \eqref{eqn:SEIRS}, in order to study the relevant limits.
For any $A\in\{S,E,I,R\}$, introduce the normalized quantities
\[
\bA(t) := \frac{A(t)}{N(t)},\qquad
\bA_i(t) := \frac{A_i(t)}{N(t)}.
\]
One gets from \eqref{eqn:SEIRS}:
\vspace{-.2cm}
\begin{subequations}
\label{eqn:SEIRS_norm}
\begin{align}
&  \dot {\bar S}_i = \frac{\dot S_i}{N}-\frac{S_i}{N^2}\dot N =\omega \bar R_{i-1} -(\beta-\nu)  \bar S_i\bar I  - b \bar S_i,\\
& \dot {\bar E}_i = \frac{\dot E_i}{N}-\frac{E_i}{N^2}\dot N =\beta \bar S_i\bar I - (\sigma+b)\bar E_i +\nu \bar I\bar E_i,\\
& \dot {\bar I}_i =\frac{\dot I_i}{N}-\frac{I_i}{N^2}\dot N = \sigma \bar E_i - (\gamma+b+\nu)\bar I_i + \nu \bar I\bar I_i,\\
& \dot {\bar R}_i = \frac{\dot R_i}{N}-\frac{R_i}{N^2}\dot N = \gamma \bar I_i - (\omega+b) \bar R_i + \nu \bar{I}\bar R_i,
\end{align}
\end{subequations}
with $\omega \bar R_0=b$.
Normalizing the macroscopic quantities gives
\vspace{-.4cm}
\begin{subequations}
\label{eqn:SEIRS_normglobal}
\begin{gather}
\dot {\bar{S}} = \frac{\dot S}{N}-\frac{S}{N^2}\dot N =b -(\beta-\nu) \bar S\bar I + \omega  \bar R - b \bar S, \\
\dot {\bar{E}} = \frac{\dot E}{N}-\frac{E}{N^2}\dot N =\beta \bar S\bar I - (\sigma+b)\bar E + \nu \bar I \bar E,\\
\dot {\bar{I}} = \frac{\dot I}{N}-\frac{I}{N^2}\dot N =\sigma \bar E - (\gamma+b+\nu)\bar I +\nu \bar I^2,\\
\dot {\bar{R}} = \frac{\dot R}{N}-\frac{R}{N^2}\dot N =\gamma \bar I - (\omega+b)\bar R + \nu \bar I\bar R.
\end{gather}
\end{subequations}

Thanks to Theorem {\rm\ref{th6}}, existence and uniqueness of solution for system \eqref{eqn:SEIRS} is guaranteed on $[0,+\infty)$.
Moreover we have, for this solution, $N(t)=\|x(t)\|_{X^4}>0$ for all $t\geq0$.
One then easily obtains, by dividing the solution of \eqref{eqn:SEIRS} by $N(t)$, existence and uniqueness result for the normalized system \eqref{eqn:SEIRS_norm}.

\begin{Rem}
Let the set $\Gamma=\{\bar x\in X^4_+ : \|\bar x\|_{X^4}=1\}$.
As $\dot {\bar{S}} + \dot {\bar{E}} + \dot {\bar{I}} + \dot {\bar{R}} = (b-\nu\bar I) (1-\bar S-\bar E-\bar I-\bar R) \equiv 0$, the set $\Gamma$ is positively invariant for the normalized system \eqref{eqn:SEIRS_norm}, as expected.
\hfill $\square$
\end{Rem}

\begin{Rem}
\label{rem6}
Notice that the normalized systems \eqref{eqn:SEIRS_norm} and \eqref{eqn:SEIRS_normglobal} do {\em not} depend upon the value of $\mu$.
\hfill $\square$
\end{Rem}

We have the following result for 
systems \eqref{eqn:SEIRS_norm} and \eqref{eqn:SEIRS_normglobal}.
\begin{Thm}
\label{th7}
The basic reproduction number of system \eqref{eqn:SEIRS_normglobal} is
$\cR_0 := \frac{\sigma}{\sigma+b}\frac{\beta}{\gamma+\nu+b}$.

Moreover, for any $\bar x_0\in \Gamma$ verifying \eqref{eq73},
the solutions 
of \eqref{eqn:SEIRS_norm} and \eqref{eqn:SEIRS_normglobal} are such that :
\begin{enumerate}
\item
if $\cR_0 < 1$, denoting $\delta$ the Kronecker delta, one has for any $i\geq 1$,
\begin{gather*}
\hspace{-.7cm}
\lim_{t\to +\infty} (\bar S_i(t), \bar E_i(t),\bar I_i(t),\bar R_i(t)) = \delta_i^1 (1,0,0,0),\\
\lim_{t\to +\infty} (\bar S(t),\bar E(t),\bar I(t),\bar R(t)) = (1,0,0,0).
\end{gather*}
\item
if $\cR_0 > 1$, there exists a unique nonzero equilibrium $(\bar S^*,\bar E^*,\bar I^*,\bar R^*)$ of \eqref{eqn:SEIRS_normglobal}, which is indeed positive.
The quantity $\bar I^*$ satisfies:
\begin{subequations}
\begin{multline}
\label{eq85}
    \left(\frac{\beta-\nu}{b}\bar I^*+1\right)\left(1-\frac{\nu \bar I^*}{\sigma+b}\right)\left(1-\frac{\nu\bar I^*}{\gamma+\nu+b}\right)\\
    =    \cR_0\left(1+\frac{\gamma}{b}\frac{\omega}{\omega+b-\nu\bar I^*}\right),
\end{multline}
and the three other quantities $\bar S^*, \bar E^*, \bar R^*$ are given as
\begin{multline}
\label{eq85bis}
\hspace{-.7cm}
\bar S^* = \frac{\gamma+b+\nu-\nu \bar I^*}{\sigma} \frac{\sigma+b-\nu \bar I^*}{\beta},\
\bar R^*=\frac{\gamma}{\omega+b-\nu \bar I^*}\bar I^*,\\
\bar E^*=\frac{\gamma+\nu+b-\nu \bar I^*}{\sigma}\bar I^*.
\end{multline}
\end{subequations}

Furthermore, let 
\begin{gather*}
\phi :=\frac{\omega}{(\beta-\nu) \bar I^*+b}\frac{\gamma}{\omega+b-\nu \bar I^*}\frac{\bar I^*}{\bar S^*},
\end{gather*}
then $0<\phi<1$ and the following asymptotic convergence property holds, for every $i\geq 1$:
\begin{gather*} 
\hspace{-.3cm}
\lim_{t\to +\infty} (\bar S_i(t), \bar E_i(t), \bar I_i(t), \bar R_i(t)) = \phi^{i-1}(\bar S_1^*,\bar E_1^*,\bar I_1^*,\bar R_1^*),\\
\lim_{t\to +\infty} (\bar S(t), \bar E(t), \bar I(t), \bar R(t))
= (\bar S^*, \bar E^*, \bar I^*, \bar R^*),
\end{gather*}
with
\begin{multline}
\label{eq84}
\bar S_1^*=\frac{b}{(\beta-\nu) \bar I^*+b},\quad
\bar E_1^*=\frac{\beta \bar I^*}{\sigma+b-\nu \bar I^*}\bar S_1^*,\\
 \bar I_1^*= \frac{\bar I^*}{\bar S^*} \bar S_1^*,\quad
\bar R_1^*= \frac{\gamma}{\omega+b-\nu \bar I^*}\frac{\bar I^*}{\bar S^*} \bar S_1^*.\hfill \square
\end{multline}
\end{enumerate}
\end{Thm}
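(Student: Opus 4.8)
The plan is to dispatch the four-dimensional macroscopic system \eqref{eqn:SEIRS_normglobal} first, and then bootstrap the infinite-dimensional behaviour of \eqref{eqn:SEIRS_norm} from it. \emph{Threshold and equilibria.} I would obtain $\cR_0$ by the next-generation method at the disease-free equilibrium $(\bS,\bE,\bI,\bR)=(1,0,0,0)$: with infected compartments $(\bE,\bI)$ the new-infection and transition matrices are
\[
F=\begin{pmatrix}0 & \beta\\ 0 & 0\end{pmatrix},\qquad
V=\begin{pmatrix}\sigma+b & 0\\ -\sigma & \gamma+\nu+b\end{pmatrix},
\]
so $\rho(FV^{-1})=\frac{\sigma}{\sigma+b}\frac{\beta}{\gamma+\nu+b}=\cR_0$. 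For the equilibria I would solve the steady-state equations from the bottom up: $\dot{\bR}=0,\dot{\bI}=0,\dot{\bE}=0$ yield $\bR^*,\bE^*,\bS^*$ as the explicit functions of $\bI^*$ in \eqref{eq85bis}, and substituting these into $\dot{\bS}=0$ collapses everything to the single scalar relation \eqref{eq85}. Reading \eqref{eq85} as $f(\bI^*)=\cR_0\,g(\bI^*)$, I would show on the admissible interval (where $\sigma+b-\nu\bI^*,\ \gamma+\nu+b-\nu\bI^*,\ \omega+b-\nu\bI^*$ are all positive, i.e.\ where $\bS^*,\bE^*,\bR^*>0$) that $f$ increases from the value $1$ at $\bI^*=0$ while the right-hand side decreases, so by monotonicity and the intermediate value theorem a \emph{unique positive} root exists precisely when $\cR_0>1$.

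\emph{Global convergence of the macroscopic system.} This is the step I expect to be the main obstacle, because the normalisation produces the indefinite-sign terms $\nu\bI\,\bE$, $\nu\bI^2$, $\nu\bI\,\bR$ that defeat the naive linear and Volterra Lyapunov estimates. When $\cR_0<1$ I would exploit that the infected subsystem $(\bE,\bI)$ is \emph{cooperative} (its off-diagonal Jacobian entries $\beta\bS+\nu\bE$ and $\sigma$ are nonnegative) and use $\bS\le 1$ on $\Gamma$ in a comparison argument to force $(\bE,\bI)\to 0$; then $\dot{\bR}=\gamma\bI-(\omega+b-\nu\bI)\bR$ and $\dot{\bS}=b+\omega\bR-((\beta-\nu)\bI+b)\bS$ drive $(\bR,\bS)\to(0,1)$. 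When $\cR_0>1$ I would combine uniform persistence of $\bI$ (from instability of the disease-free equilibrium via an acyclicity/persistence theorem) with local asymptotic stability of the endemic equilibrium (Routh--Hurwitz on the three-dimensional reduction obtained from $\bS+\bE+\bI+\bR=1$) and exclusion of periodic orbits by a Li--Muldowney second-compound-matrix criterion; LaSalle's principle then yields convergence to $(\bS^*,\bE^*,\bI^*,\bR^*)$. The sign conditions needed here are exactly those already guaranteed by positivity of $\bS^*,\bE^*,\bR^*$.

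\emph{Microscopic profile and convergence.} Once $\bI(t)\to\bI^*$ is established, each index block of \eqref{eqn:SEIRS_norm} is a linear system in $(\bS_i,\bE_i,\bI_i,\bR_i)$ whose coefficient matrix is lower-triangular with diagonal $-((\beta-\nu)\bI^*+b)$, $-(\sigma+b-\nu\bI^*)$, $-(\gamma+\nu+b-\nu\bI^*)$, $-(\omega+b-\nu\bI^*)$ — all negative by the sign conditions above, hence Hurwitz — and is forced only through $\omega\bR_{i-1}$. I would therefore induct on $i$: block $i=1$ is driven by the constant $\omega\bR_0=b$, so as an asymptotically autonomous stable linear system it converges to the fixed point \eqref{eq84}; granting convergence of block $i-1$, the forcing $\omega\bR_{i-1}$ converges and block $i$ converges to its fixed point. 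Solving the steady block recursion $\bR_{i-1}^*\mapsto\bS_i^*\mapsto\bE_i^*\mapsto\bI_i^*\mapsto\bR_i^*$ and using the macroscopic identity $\frac{\sigma}{\gamma+\nu+b-\nu\bI^*}\frac{\beta\bI^*}{\sigma+b-\nu\bI^*}=\frac{\bI^*}{\bS^*}$ collapses it to the single ratio $\bS_i^*=\phi\,\bS_{i-1}^*$, and the same $\phi$ then propagates to all four components, giving the profile $\phi^{\,i-1}(\bS_1^*,\bE_1^*,\bI_1^*,\bR_1^*)$. Positivity of every factor gives $\phi>0$, while summing $\dot{\bS}=0$ over $i$ (equivalently the macroscopic relation $((\beta-\nu)\bI^*+b)\bS^*=b+\omega\bR^*$) yields the closed form $\phi=\frac{\omega\bR^*}{b+\omega\bR^*}\in(0,1)$, which is consistent with $\sum_{i\ge 1}\bS_i^*=\frac{\bS_1^*}{1-\phi}=\bS^*$. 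The very same induction, taken with $\bI^*=0$ in the regime $\cR_0<1$, sends the forcing of every block $i\ge 2$ to zero and block $1$ to $(1,0,0,0)$, producing the collapsed limit $\delta_i^1(1,0,0,0)$.
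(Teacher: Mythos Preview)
Your overall architecture coincides with the paper's: compute $\cR_0$ by the next-generation matrix, establish global convergence of the macroscopic system \eqref{eqn:SEIRS_normglobal}, and then bootstrap the microscopic components \eqref{eqn:SEIRS_norm} recursively, block by block, using that each block is an asymptotically autonomous linear system with Hurwitz limiting matrix and forcing $\omega\bar R_{i-1}$. The paper does not carry out the macroscopic stability analysis itself but delegates it to the literature (Greenhalgh for $\cR_0<1$, Lu et al.\ for $\cR_0>1$); your cooperativity/comparison sketch in the subcritical case and your persistence plus Li--Muldowney second-compound proposal in the supercritical case are reasonable stand-ins, and your identification $\phi=\omega\bar R^*/(b+\omega\bar R^*)$ from the macroscopic balance $((\beta-\nu)\bar I^*+b)\bar S^*=b+\omega\bar R^*$ is a clean way to obtain $0<\phi<1$.

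One genuine slip: in the uniqueness argument for the endemic root of \eqref{eq85} you assert that the left-hand side increases while the right-hand side decreases. For $\nu>0$ the right-hand side $\cR_0\big(1+\frac{\gamma}{b}\frac{\omega}{\omega+b-\nu\bar I^*}\big)$ is \emph{increasing} in $\bar I^*$ on the admissible interval (the denominator shrinks), and the left-hand side is a product of one increasing and two decreasing factors, so neither monotonicity claim holds as stated, and the intermediate-value/monotonicity argument collapses. The case $\nu=0$ is fine (the right-hand side is constant and the left-hand side is affine increasing), but for general $\nu$ you should either derive uniqueness differently---e.g.\ from the constraint $\bar S^*+\bar E^*+\bar I^*+\bar R^*=1$ after substituting \eqref{eq85bis}, which yields a single-variable equation whose monotonicity is easier to read off---or simply note, as you in effect already do, that global asymptotic stability of the endemic equilibrium forces its uniqueness.
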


\begin{proof}
The basic reproduction number of \eqref{eqn:SEIRS_normglobal} is easily computed by the method of next-generation matrix \cite{Driessche2002,Diekmann1990}.
Convergence to the equilibrium of the macroscopic system \eqref{eqn:SEIRS_normglobal} is proved in \cite{Greenhalgh1997} in the case $\cR_0<1$, and in \cite{Lu2018} in the case $\cR_0>1$.
In both cases, one proceeds recursively to prove the convergence to the equilibrium of the microscopic system, using  the convergence of system \eqref{eqn:SEIRS_normglobal}.
\end{proof}
\begin{Rem}
Theorem {\rm\ref{th7}} shows in particular that $\cR_0=1$ constitutes a threshold condition, which determines if the disease will eventually die out or remain in an endemic state with persistent (re)infections. 
\hfill $\square$
\end{Rem}
\begin{Rem}
\label{re5}
One may show from \eqref{eq85bis} and \eqref{eq84} that, for any $\bA, \bB\in\{\bS,\bE,\bI,\bR\}$, any $i,j\in\Nset$,
$\frac{\bA_i}{\bB_j} = \phi^{i-j} \frac{\bA}{\bB}$.
\hfill $\square$
\end{Rem}

The case $\nu=0$ yields simpler formulas, given now.

\begin{Cor}
\label{Cor1}
When $\nu=0$ and $\cR_0>1$, then
\[\cR_0=\frac{\beta}{\gamma+b}\frac{\sigma}{\sigma+b}, \quad \phi=\frac{\gamma\omega}{\beta(\omega+ b )-\gamma\omega}(\cR_0-1),\]
Moreover, we have an explicit formula for the value of $\bar I^*$ :
\begin{align*}
\bar I^* = \frac{\cR_0-1}{\cR_0}\frac{1}{\zeta},\ \zeta := \frac{(\gamma+b)(\sigma+b)(\omega+b)-\omega\gamma\sigma}{\sigma b(\omega+b)} > 1. \label{zeta} 
\end{align*}
The constant $ \zeta $ is 
called {\em number of critical stability} {\rm\cite{Cheng2012}}.\hfill $\square$
\end{Cor}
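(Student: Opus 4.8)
The plan is to specialize the general endemic-equilibrium formulas from Theorem~\ref{th7} to the case $\nu=0$ and simplify. First I would substitute $\nu=0$ directly into the definition $\cR_0=\frac{\sigma}{\sigma+b}\frac{\beta}{\gamma+\nu+b}$, which immediately gives $\cR_0=\frac{\beta}{\gamma+b}\frac{\sigma}{\sigma+b}$, recovering the stated reproduction number. The genuine work lies in extracting the explicit value of $\bar I^*$ from the defining relation \eqref{eq85}, which becomes much more tractable once $\nu=0$: the two factors $\left(1-\frac{\nu\bar I^*}{\sigma+b}\right)$ and $\left(1-\frac{\nu\bar I^*}{\gamma+\nu+b}\right)$ collapse to $1$, and the term $\frac{\beta-\nu}{b}\bar I^*$ becomes $\frac{\beta}{b}\bar I^*$, so \eqref{eq85} reduces to a single rational equation in $\bar I^*$.

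Next I would solve that reduced equation. With $\nu=0$ the relation reads $\left(\frac{\beta}{b}\bar I^*+1\right)=\cR_0\left(1+\frac{\gamma}{b}\frac{\omega}{\omega+b}\right)$; clearing the denominator $\omega+b$ and collecting terms linear in $\bar I^*$ yields a linear equation, whose solution I expect to be $\bar I^*=\frac{\cR_0-1}{\cR_0}\frac{1}{\zeta}$ after identifying the combination of parameters appearing in the denominator with $\zeta$. The key algebraic step is to recognize that the constant multiplying $\bar I^*$, once everything is put over the common denominator $\sigma b(\omega+b)$, equals exactly $\cR_0\zeta$ with $\zeta=\frac{(\gamma+b)(\sigma+b)(\omega+b)-\omega\gamma\sigma}{\sigma b(\omega+b)}$. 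This bookkeeping — verifying that the coefficient of $\bar I^*$ factors precisely as claimed and that the constant term cancels to produce the clean factor $\frac{\cR_0-1}{\cR_0}$ — is the main technical obstacle, though it is purely routine manipulation rather than a conceptual difficulty.

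For the formula for $\phi$, I would take the definition $\phi=\frac{\omega}{(\beta-\nu)\bar I^*+b}\frac{\gamma}{\omega+b-\nu\bar I^*}\frac{\bar I^*}{\bar S^*}$ from Theorem~\ref{th7}, set $\nu=0$ to obtain $\phi=\frac{\omega}{\beta\bar I^*+b}\frac{\gamma}{\omega+b}\frac{\bar I^*}{\bar S^*}$, and then substitute the $\nu=0$ form of $\bar S^*$ from \eqref{eq85bis}, namely $\bar S^*=\frac{(\gamma+b)(\sigma+b)}{\sigma\beta}=\frac{1}{\cR_0}$. Using $\bar S^*=1/\cR_0$ together with the just-derived value of $\bar I^*$ should let the factor $\frac{\bar I^*}{\bar S^*}$ simplify cleanly, and the whole expression should collapse to $\phi=\frac{\gamma\omega}{\beta(\omega+b)-\gamma\omega}(\cR_0-1)$ after using the explicit $\bar I^*$ to rewrite $\beta\bar I^*+b$.

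Finally, I would establish the claim $\zeta>1$, which is needed for consistency (it guarantees $\bar I^*<1$ and $\phi>0$ in the endemic regime). Since $\zeta-1=\frac{(\gamma+b)(\sigma+b)(\omega+b)-\omega\gamma\sigma-\sigma b(\omega+b)}{\sigma b(\omega+b)}$, I would expand the numerator and verify it is strictly positive for positive parameters; I expect every problematic cross term to be dominated by the positive contributions coming from the factors involving $b$, so the inequality follows from a term-by-term comparison. Because all these steps are elementary substitutions and simplifications justified entirely by the already-proven Theorem~\ref{th7}, the proof amounts to verifying the stated identities, and I would present it concisely as a direct computation in the case $\nu=0$.
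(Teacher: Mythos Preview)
Your plan is correct and essentially identical to the paper's own proof, which simply records that the $\cR_0$ and $\phi$ formulas follow directly from Theorem~\ref{th7}, that \eqref{eq85} becomes an affine equation in $\bar I^*$ when $\nu=0$, and that expanding the numerator of $\zeta$ gives $\zeta>1$. One slip to fix: the reduced form of \eqref{eq85} you wrote is missing a factor $\bar I^*$ on the right-hand side; the correct $\nu=0$ specialization is $\frac{\beta}{b}\bar I^*+1=\cR_0\bigl(1+\frac{\gamma\omega}{b(\omega+b)}\,\bar I^*\bigr)$, and only with this factor does the coefficient of $\bar I^*$ come out to $\cR_0\zeta$ as you (correctly) anticipate in the very next sentence.
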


\begin{proof}
The 1st part of Corollary \ref{Cor1} is evident by Theorem \ref{th7}.
On the other hand, when $\nu=0$, then \eqref{eq85} reduces to an affine equation.
Finally, expanding the numerator in the expression of $\zeta$, shows easily that $\zeta>1$.
\end{proof}

We now derive from Theorem \ref{th7} a complete picture of the asymptotic behavior of the total size of the (non-normalized) population.

\begin{Thm}
\label{Pro1}
Let $N(t)$ be the total population size at time $t$ of the solution of system \eqref{eqn:SEIRS}
corresponding to a given initial condition fulfilling \eqref{eq73}.
The following properties hold.
\begin{enumerate}
    \item If $b<\mu$, then $N(t)$ converges to $0$ when $t \to +\infty$. \label{1}
    \item If $b=\mu$, \label{2}
    \begin{enumerate} 
        \item If $\nu=0$, then $N(t)=N(0)$ for all $t$. \label{2a}
        \item If $\nu>0$, 
        \begin{enumerate}
        \item $N(t)$ converges asymptotically to a positive finite limit when $\cR_0<1$. \label{2bi}
        \item $N(t)$ converges asymptotically to $0$ when $\cR_0>1$. \label{2bii}
        \end{enumerate}
    \end{enumerate}
    \item If $b>\mu$,
    \begin{enumerate}
        \item If $b-\mu> \nu \bar I^*$, then $N(t)$ tends to $+\infty$. \label{Pro13a}
        \item If $\cR_0>1$ and $b-\mu<\nu \bar I^*$, then $N(t)$ converges to $0$.
\hfill $\square$
        \label{Pro13b}
    \end{enumerate}
\end{enumerate}
\end{Thm}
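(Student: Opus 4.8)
The plan is to collapse the whole question onto a single scalar equation for $N$. Summing the four equations of \eqref{eqn:SEIRS_global} — which is legitimate because Theorem \ref{th6} guarantees that $(S,E,I,R)$ solves \eqref{eqn:SEIRS_global} — all the transfer terms cancel and only the mortality terms survive, giving $\dot N = (b-\mu)N-\nu I$. Writing $I=\bar I N$ with $\bar I=I/N$ the normalized infectious fraction, this is
\[
\dot N = \bigl(b-\mu-\nu\bar I(t)\bigr)N,
\]
so that
\[
N(t)=N(0)\exp\!\left(\int_0^t\bigl(b-\mu-\nu\bar I(s)\bigr)\,ds\right).
\]
Thus the asymptotics of $N$ are controlled entirely by the sign and the integrability of the exponent. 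Since $\bar x(t)=x(t)/N(t)$ solves the normalized system with $\bar x_0\in\Gamma$, Theorem \ref{th7} supplies the limit of $\bar I$: one has $\bar I(t)\to 0$ when $\cR_0<1$ and $\bar I(t)\to\bar I^*>0$ when $\cR_0>1$, and throughout $0\le\bar I(t)\le 1$ because $\bar x(t)\in\Gamma$.

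Next I would dispatch the cases in which the exponent has an eventual definite sign, which are all but one. If $b<\mu$ (case \ref{1}), the integrand is bounded above by $b-\mu<0$, the exponent tends to $-\infty$, and $N(t)\to 0$. If $b=\mu$ and $\nu=0$ (case \ref{2a}) the exponent vanishes identically, so $N\equiv N(0)$. If $b=\mu$, $\nu>0$ and $\cR_0>1$ (case \ref{2bii}), then $\bar I\to\bar I^*>0$ forces $\int_0^t\bar I\to+\infty$, hence $N(t)\to 0$. For $b>\mu$ I would fix $\varepsilon>0$ small and $T_0$ large so that $\bar I(t)$ stays within $\varepsilon$ of its limit for $t\ge T_0$: in case \ref{Pro13a} the hypothesis $b-\mu>\nu\bar I^*$ (with the convention $\bar I^*=0$ when $\cR_0\le 1$, where $\bar I\to 0$) makes the exponent eventually bounded below by a positive constant, giving $N(t)\to+\infty$; in case \ref{Pro13b} the hypotheses $\cR_0>1$ and $b-\mu<\nu\bar I^*$ make it eventually bounded above by a negative constant, giving $N(t)\to 0$. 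The borderline $b-\mu=\nu\bar I^*$ is indeterminate at this level and, consistently with the statement, left aside.

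The one case that is not a bare sign argument, and which I expect to be the main obstacle, is \ref{2bi}: $b=\mu$, $\nu>0$, $\cR_0<1$. Here $\dot N/N=-\nu\bar I\le 0$, so $N$ is nonincreasing and bounded below by $0$; the difficulty is to show the limit is strictly positive, i.e.\ that $\int_0^\infty\bar I(s)\,ds<\infty$. Knowing only $\bar I(t)\to 0$ does not suffice, so I would extract an integrable bound through a Lyapunov functional. Setting $V:=\tfrac{\sigma}{\sigma+b}\bar E+\bar I$, a direct computation from \eqref{eqn:SEIRS_normglobal} cancels the $\bar E$ terms and, using $\bar S\le 1$ together with the definition of $\cR_0$, yields
\begin{align*}
\dot V &= \Bigl(\tfrac{\sigma}{\sigma+b}\beta\bar S-(\gamma+b+\nu)\Bigr)\bar I+\nu\bar I\,V\\
&\le (\gamma+b+\nu)(\cR_0-1)\,\bar I+\nu\bar I\,V.
\end{align*}
Writing $c:=(\gamma+b+\nu)(1-\cR_0)>0$ and using $V(t)\to 0$ (since $\bar E,\bar I\to 0$), I would choose $T_0$ with $\nu V(t)\le c/2$ for $t\ge T_0$, whence $\dot V\le-\tfrac{c}{2}\bar I$ on $[T_0,\infty)$. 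Integrating and using $V\ge 0$ bounds $\int_{T_0}^\infty\bar I\le\tfrac{2}{c}V(T_0)<\infty$, and adding the finite contribution over $[0,T_0]$ gives $\int_0^\infty\bar I<\infty$. Consequently $N(t)\to N(0)\exp\bigl(-\nu\int_0^\infty\bar I\bigr)\in(0,N(0)]$, a positive finite limit, which is the assertion of case \ref{2bi}.
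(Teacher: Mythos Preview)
Your argument is correct and, for cases \ref{1}, \ref{2a}, \ref{2bii}, \ref{Pro13a}, \ref{Pro13b}, matches the paper's proof essentially line for line: both reduce everything to the scalar identity $\dot N=(b-\mu-\nu\bar I)N$ and read off the sign of the exponent from the limit of $\bar I$ supplied by Theorem~\ref{th7}.

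The one genuine divergence is case \ref{2bi}. The paper obtains integrability of $\bar I$ by linearization: it asserts that the Jacobian of \eqref{eqn:SEIRS_normglobal} at the disease-free equilibrium is Hurwitz, which upgrades the convergence $\bar I\to 0$ of Theorem~\ref{th7} to an exponential rate $\bar I(t)\le c_1e^{-c_2t}$, and then integrates $\dot N/N\ge -\nu c_1e^{-c_2t}$. You instead build the functional $V=\tfrac{\sigma}{\sigma+b}\bar E+\bar I$ and show $\dot V\le -\tfrac{c}{2}\bar I$ for large times, which gives $\int_0^\infty\bar I<\infty$ directly without any spectral computation. Your route is more self-contained and avoids having to verify the Hurwitz property (which the paper does not spell out); the paper's route, on the other hand, yields the sharper quantitative conclusion of an exponential rate for $\bar I$, though that extra information is not needed for the statement at hand. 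Both approaches ultimately rest on the same mechanism, namely that when $\cR_0<1$ the infectious fraction decays fast enough to make $\int_0^\infty\bar I$ finite.
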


As $\bar I^*$ does not depend upon $\mu$ (see Remark \ref{rem6}), there exist parameter sets fulfilling the case \ref{Pro13a}, resp.~\ref{Pro13b}.

\begin{proof}

\noindent $\bullet$
Assertions \ref{1}, \ref{2a} are evident from the identity $\dot N(t) = (b-\mu) N(t) - \nu I(t)$.
\ref{2bii} is proved thanks to Theorem \ref{th7}.

\noindent $\bullet$
For the case \ref{2bi}, showing that the Jacobian matrix of \eqref{eqn:SEIRS_normglobal} is Hurwitz demonstrates that $\bar I$ tends indeed exponentially to 0.
Writing $I = \bI N$, there thus exist $c_1,c_2>0$ such that
$-\nu c_1e^{-c_2t}N\leq \dot N  \leq 0$,
which yields by integration:
\[
\log N(t) \geq \log N(0)+\nu\frac{c_1}{c_2}(e^{-c_2t}-1) \geq \log N(0)-\nu \frac{c_1}{c_2},
\]
and finally :
$N(t)\geq N(0)e^{-\nu\frac{c_1}{c_2}}>0$,
for any $t\geq 0$.
$N(t)$ is thus decreasing and lower bounded by a positive quantity, so \ref{2bi} holds.

\noindent $\bullet$
When $b>\mu$ et $b-\mu>\nu \bar I^*$, then for any $\varepsilon>0$ and $t$ sufficiently large,
$\left(b-\mu-\nu (\bar I^*+\varepsilon)\right)N\leq \dot N$.
Taking $\varepsilon>0$ small enough in order to have $b-\mu-\nu (\bar I^*+\varepsilon)>0$, we prove the case \ref{Pro13a}.
Last, taking now $-\varepsilon$ instead of $\varepsilon$ and inverting the last inequality, yields similarly the case \ref{Pro13b}.
\end{proof}

\section{MEAN NUMBERS OF REINFECTIONS}
\label{se5}

Based on the convergence properties previously established, we now obtain
significant quantities, related to the asymptotic mean numbers of reinfections at endemic equilibrium.
\begin{Thm}
\label{th9}
Let $\cR_0>1$, then 
\vspace{-.1cm}
\begin{gather*}
\frac{\sum_{i\geq 1} (i-1)\bar S^*_i}{\sum_{i\geq 1} \bar S^*_i}
=\frac{\phi}{1-\phi},\\
\frac{\sum_{i\geq 1} i\bar E^*_i}{\sum_{i\geq 1} \bar E^*_i}=\frac{\sum_{i\geq 1} i\bar I^*_i}{\sum_{i\geq 1} \bar I^*_i}=\frac{\sum_{i\geq 1} i\bar R^*_i}{\sum_{i\geq 1} \bar R^*_i}=\frac{1}{1-\phi},
\end{gather*}
\vspace{-.5cm} 
\begin{multline*}
\frac{\sum_{i\geq 1} ((i-1)\bar S^*_i+ i(\bar E^*_i+\bar I^*_i+\bar R_i^*))}{\sum_{i\geq 1} (\bar S^*_i+\bar E^*_i+\bar I^*_i+\bar R^*_i)}\\
\hspace{2cm}
=\frac{1}{(1-\phi)^2}\frac{b-b\phi-\nu\bar I_1^*}{b-\nu\bar I^*}-\bar S^*.
\hfill \square
\end{multline*} 
\end{Thm}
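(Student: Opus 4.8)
The plan is to exploit the explicit geometric structure of the endemic equilibrium furnished by Theorem~\ref{th7}. For $\cR_0>1$ and any $A\in\{S,E,I,R\}$, that result gives $\bar A_i^*=\phi^{i-1}\bar A_1^*$ for every $i\geq 1$, with $0<\phi<1$. Summing the geometric series yields $\bar A^*=\sum_{i\geq1}\bar A_i^*=\bar A_1^*/(1-\phi)$, equivalently $\bar A_1^*=(1-\phi)\bar A^*$. I would record at the outset, in particular, the relation $\bar I_1^*=(1-\phi)\bar I^*$, which is precisely what reconciles the compact and the stated forms of the final identity.

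The first two groups of identities then reduce to standard generating-series computations. First I would record $\sum_{i\geq1}\phi^{i-1}=\frac{1}{1-\phi}$, together with $\sum_{i\geq1}(i-1)\phi^{i-1}=\frac{\phi}{(1-\phi)^2}$ and $\sum_{i\geq1}i\,\phi^{i-1}=\frac{1}{(1-\phi)^2}$, the last two obtained by differentiating the geometric series term by term (licit on $|\phi|<1$). Substituting $\bar A_i^*=\phi^{i-1}\bar A_1^*$ and cancelling the common factor $\bar A_1^*$, the ratio $\frac{\sum_{i\geq1}(i-1)\bar S_i^*}{\sum_{i\geq1}\bar S_i^*}$ collapses to $\frac{\phi/(1-\phi)^2}{1/(1-\phi)}=\frac{\phi}{1-\phi}$, while each of the ratios built from $\bar E,\bar I,\bar R$ collapses to $\frac{1/(1-\phi)^2}{1/(1-\phi)}=\frac{1}{1-\phi}$. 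The weight $(i-1)$ for $\bar S$ versus $i$ for the three other compartments is exactly what produces the two distinct values.

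For the last identity I would treat numerator and denominator separately. The denominator is the total normalised equilibrium mass $\sum_{i\geq1}(\bar S_i^*+\bar E_i^*+\bar I_i^*+\bar R_i^*)=\bar S^*+\bar E^*+\bar I^*+\bar R^*$, which equals $1$ because the equilibrium lies on the invariant simplex $\Gamma$. Splitting the numerator along the four compartments and reusing the sums above gives $\frac{\phi}{1-\phi}\bar S^*+\frac{1}{1-\phi}(\bar E^*+\bar I^*+\bar R^*)$; inserting $\bar E^*+\bar I^*+\bar R^*=1-\bar S^*$ (again the $\Gamma$ constraint), this simplifies to $\frac{1}{1-\phi}-\bar S^*$.

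The only remaining task is the cosmetic rewriting of $\frac{1}{1-\phi}$ into the stated quotient. I would check directly that $(1-\phi)(b-\nu\bar I^*)=b-b\phi-\nu(1-\phi)\bar I^*=b-b\phi-\nu\bar I_1^*$, using the relation $\bar I_1^*=(1-\phi)\bar I^*$ recorded at the beginning; dividing by $(1-\phi)^2(b-\nu\bar I^*)$ then turns $\frac{1}{1-\phi}$ into $\frac{1}{(1-\phi)^2}\frac{b-b\phi-\nu\bar I_1^*}{b-\nu\bar I^*}$, which matches the claim. There is no genuine analytic obstacle: everything rests on the closed-form equilibrium of Theorem~\ref{th7} and on elementary geometric sums, so the only care needed is bookkeeping — keeping the $(i-1)$ versus $i$ weights straight, and recognising that both denominators collapse to $1$ through the $\Gamma$ constraint.
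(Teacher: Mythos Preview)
Your argument is correct and follows essentially the same route as the paper: invoke the geometric structure $\bar A_i^*=\phi^{i-1}\bar A_1^*$ from Theorem~\ref{th7} and evaluate the ratios via standard geometric-series identities. You in fact supply more detail than the paper's own proof, including the verification that the stated form of the last identity is just a rewriting of $\frac{1}{1-\phi}-\bar S^*$ via $\bar I_1^*=(1-\phi)\bar I^*$.
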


\begin{proof}
According to Theorem \ref{th7}, the sequences $\{\bar S_i^*\}$, $\{\bar E_i^*\}$, $\{\bar I_i^*\}$ et $\{\bar R_i^*\}$ are geometric of common ratio $0<\phi<1$.
This allows to use geometric series formulas.
\end{proof}

The quantities considered in the previous statement are the mean numbers of infections undergone respectively by the susceptible individuals, by the non-susceptible individuals, and by the global population.

In absence of infection-induced mortality, we may find explicitly the exact value of $\bI^*$, yielding analytic expressions for the quantities considered in Theorem \ref{th9}.
\begin{Cor}
\label{pro2}
Suppose $\nu=0$ and $\cR_0>1$, then:
\label{eq219}
\vspace{-.1cm}
\begin{gather*}
\frac{\sum_{i\geq 1} (i-1)\bar S^*_i}{\sum_{i\geq 1} \bar S^*_i}
= \frac{\gamma\omega(\cR_0-1)}{\beta(\omega+b)-\gamma\omega\cR_0}, \label{eq219a}\\
\frac{\sum_{i\geq 1} i\bar E^*_i}{\sum_{i\geq 1} \bar E^*_i}=\frac{\sum_{i\geq 1} i\bar I^*_i}{\sum_{i\geq 1} \bar I^*_i}=\frac{\sum_{i\geq 1} i\bar R^*_i}{\sum_{i\geq 1} \bar R^*_i}=\frac{\beta(\omega+b)-\gamma\omega}{\beta(\omega+b)-\gamma\omega\cR_0},
\label{eq219b}
\end{gather*}
\begin{multline*}
\frac{\sum_{i\geq 1} ((i-1)\bar S^*_i+ i(\bar E^*_i+\bar I^*_i+\bar R_i^*))}{\sum_{i\geq 1} (\bar S^*_i+\bar E^*_i+\bar I^*_i+\bar R^*_i)}\\
\hspace{2cm}
= \frac{\beta(\omega+b)(\cR_0-1)}{(\beta(\omega+b)-\gamma\omega\cR_0)\cR_0}.
\hfill \square
\label{eq219c}
\end{multline*}
\end{Cor}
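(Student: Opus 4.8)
The plan is to substitute the closed-form value of $\phi$ furnished by Corollary \ref{Cor1} into the three expressions of Theorem \ref{th9} and then simplify, setting $\nu=0$ throughout. The case $\nu=0$ does two things at once: it provides the explicit value $\phi=\frac{\gamma\omega}{\beta(\omega+b)-\gamma\omega}(\cR_0-1)$, and it collapses the $\nu$-dependent factors appearing in the third formula of Theorem \ref{th9}.

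First I would dispatch the two simpler identities. Writing $\phi=\frac{\gamma\omega(\cR_0-1)}{\beta(\omega+b)-\gamma\omega}$, a one-line computation gives $1-\phi=\frac{\beta(\omega+b)-\gamma\omega\cR_0}{\beta(\omega+b)-\gamma\omega}$. Consequently $\frac{\phi}{1-\phi}$ and $\frac{1}{1-\phi}$ reduce at once to the first two claimed formulas; this is pure algebra with no subtlety, since Theorem \ref{th9} already expresses these mean numbers directly in terms of $\phi$.

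The third identity requires slightly more care. Setting $\nu=0$ in the right-hand side of Theorem \ref{th9}, the factor $\frac{b-b\phi-\nu\bar I_1^*}{b-\nu\bar I^*}$ collapses to $\frac{b(1-\phi)}{b}=1-\phi$, so the whole expression simplifies to $\frac{1}{1-\phi}-\bar S^*$. The key remaining ingredient is the evaluation of $\bar S^*$: taking $\nu=0$ in \eqref{eq85bis} yields $\bar S^*=\frac{(\gamma+b)(\sigma+b)}{\sigma\beta}$, which by the definition of $\cR_0$ in Corollary \ref{Cor1} is exactly $\cR_0^{-1}$. Substituting the value of $\frac{1}{1-\phi}$ found above and combining $\frac{1}{1-\phi}-\cR_0^{-1}$ over a common denominator, the numerator telescopes to $\beta(\omega+b)(\cR_0-1)$, delivering the last formula.

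I do not anticipate any genuine obstacle: the statement is a specialization obtained by substitution, and the whole argument is driven by the geometric-series structure already exploited in the proof of Theorem \ref{th9}. The only points demanding attention are the bookkeeping in the final combination $\frac{1}{1-\phi}-\cR_0^{-1}$ and the recognition of the clean identity $\bar S^*=\cR_0^{-1}$, which is precisely what makes the resulting expression compact; noting that the $\nu$-dependent ratio in Theorem \ref{th9} degenerates to $1-\phi$ exactly when $\nu=0$ is what unlocks the simplification.
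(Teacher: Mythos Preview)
Your proposal is correct and follows exactly the route indicated in the paper, which simply states that the corollary comes from applying Corollary~\ref{Cor1} to the formulas of Theorem~\ref{th9}. You have in fact supplied more detail than the paper does, including the key observation $\bar S^*=\cR_0^{-1}$ and the collapse of the $\nu$-dependent factor to $1-\phi$, both of which are exactly what is needed.
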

The proof of Corollary \ref{pro2} comes directly from applying Corollary \ref{Cor1} to the formulas in Theorem \ref{th9}.

\section{NUMERICAL SIMULATIONS}
\label{se6}

With the aim of illustrating the previous results, we present here some numerical simulations of system \eqref{eqn:SEIRS_norm}, for values of the coefficients borrowed from Bj\o rnstad et al.~\cite{Bjornstad2020}.
More precisely, we take $\cR_0=3$, $\gamma^{-1}=14$ days, $\sigma^{-1}=7$ days, $\omega^{-1}=1$ year, $b^{-1} = \mu^{-1}=76$ years, $\nu=0$ and $\beta=0.21$ days$^{-1}$.
The initial condition is chosen as $I^0=I^0_1=10^{-3}$, $S^0=S^0_1=1-I^0$, and all the other values null.
\begin{figure*}[ht]
\centering
\begin{minipage}[b]{0.45\linewidth}
\includegraphics[scale=0.4]{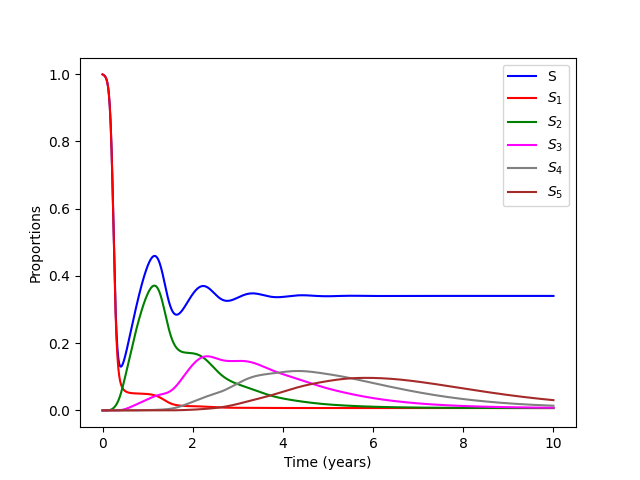}
\caption{Evolution of susceptible populations}
\label{fi3}
\end{minipage} 
\begin{minipage}[b]{0.45\linewidth}
\includegraphics[scale=0.4]{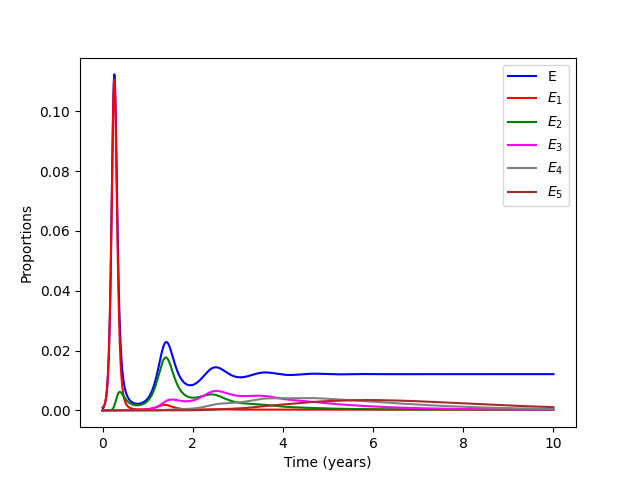}
\caption{Evolution of exposed populations}
\end{minipage}
\begin{minipage}[b]{0.45\linewidth}
\includegraphics[scale=0.4]{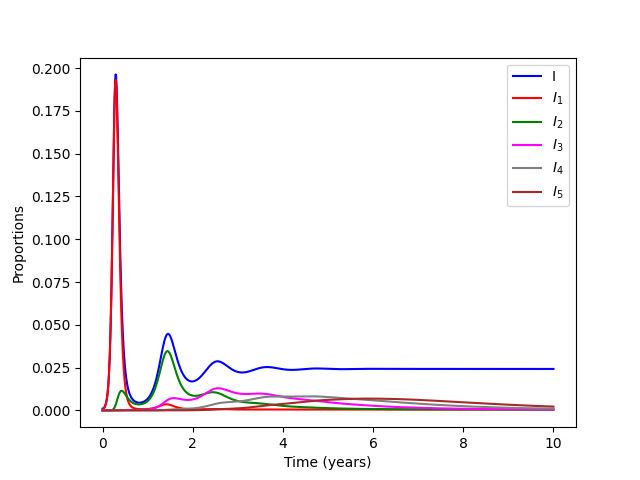}
\caption{Evolution of infected populations}
\label{fi5}
\end{minipage} 
\begin{minipage}[b]{0.45\linewidth}
\includegraphics[scale=0.4]{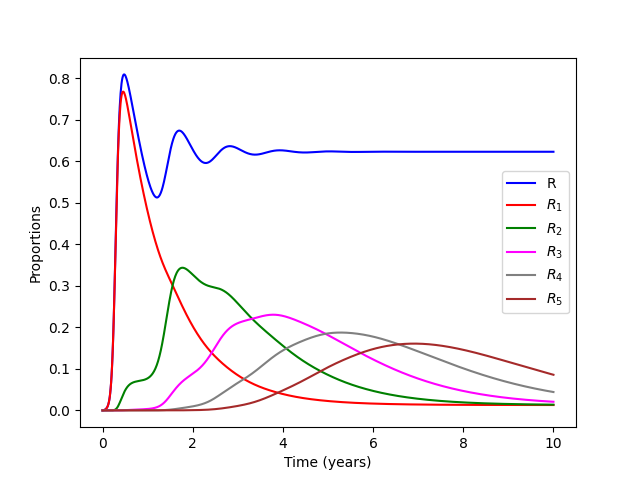}
\caption{Evolution of recovered populations}
\label{fi6}
\end{minipage}
\end{figure*}

The results are shown in Figures \ref{fi3}--\ref{fi6}.
They suggest that the convergence of the macroscopic components is slower than the convergence of each microscopic ones, in spite of the fact that the period during which these components evolve significantly begins and ends apparently later, and also later and later for higher numbers of reinfections.

One may check that, for this system with $b=\mu$ and $\nu=0$, one has at the endemic equilibrium
\[
\frac{\bS_1^*}{\bS^* }  = \frac{b\sigma\beta}{(\beta \bI^*+b)(\gamma+b)(\sigma+b)} \simeq 0.02.
\]
Also, the Jacobian matrix for the system \eqref{eqn:SEIRS_normglobal} describing the evolution of the macroscopic components is
\[
\begin{pmatrix}
-\beta\bar I^*-b &  0 & -\beta\bar S^* & \omega \\ \beta \bar I^* & -(\sigma+b) & \beta \bar S^*+\bar E^* & 0 \\ 0 & \sigma & -(\gamma+b) & 0 \\ 0 & 0 & \gamma & -(\omega+b)
\end{pmatrix},
\]
while all diagonal blocks of the block-triangular Jacobian matrix of the truncated system \eqref{eqn:SEIRS_norm}, corresponding to the evolution of a finite number of microscopic modes,
are worth
\[
\begin{pmatrix}
-\beta\bar I^*- b &  0 & 0 & 0 \\ \beta \bar I^* & -(\sigma+b) & 0 & 0 \\ 0 & \sigma & -(\gamma+b) & 0 \\ 0 & 0 & \gamma & -(\omega+b)
\end{pmatrix}.
\]
The latter matrix is diagonal, its spectrum is real, and is numerically approximated to
$\{ -1.01, -1.87$, $-26.08, -52.17\}$.
One computes numerically the spectrum of the former matrix, which appears to be complex, and approximately equal to $\{-1.28\times 10^{-2}, -68.62, -6.24 \pm 3.16 i \}$.
Both matrices are Hurwitz, and the largest real part of the eigenvalues is associated to the macroscopic evolution, as foreseen. 

\section{OBSERVABILITY AND IDENTIFIABILITY OF A SIMPLIFIED SIS MODEL}
\label{se7}

In order to illustrate the interest of the previous study for system identifiability, we consider here the following SIS system, formally obtained from
\eqref{eqn:SEIRS_norm} by putting $\nu=0$ and $\sigma, \omega \to+\infty$:
for any $i\geq 1$, 
\begin{subequations}
\label{eqn:SIS}
\begin{gather}
\label{eq1a}
  \hspace{-.17cm}  \dot{S}_{i}=\gamma I_{i-1}-\beta S_{i}I- \mu  S_{i},\ S_i(0)=S_i^0,\\
\label{eq1b}
    \dot{I}_{i}=\beta S_{i}I-( \mu +\gamma)I_{i}, \ I_i(0)=I_i^0.
    \end{gather}
\end{subequations}
One checks easily that the results obtained above are valid for this system.
Let us now show that the knowledge of the two positive limits $I^*$ and $I_1^*$ allows to compute all parameters of the system, provided the mortality rate $\mu$ is known.

\begin{Thm}
\label{th4}
Suppose the coefficient $\mu$ is known and that the limits of the numbers of infected $I^*>0$ and primo-infected $I_1^*>0$ are measured.
Then
\[\cR_0=\frac{1}{1-I^*},\]
and by posing
\begin{equation*}
\label{eq66a}
\theta:=\frac{I^*_1}{\sum_{i\geq 1} I^*_i} = \frac{I^*_1}{I^*},
\end{equation*}
the infection rate $\beta$ and the recovery rate $\gamma$ are given by:
\begin{equation*}
\label{eq72}
\beta
= \frac{\mu}{\cR_0-1}\left(
\frac{\cR_0^2}{\theta}-1
\right)\!,\ \gamma=\frac{\mu}{\cR_0-1}\left(
\frac{\cR_0}{\theta}-1
\right).\quad
\hfill \square
\end{equation*}
\end{Thm}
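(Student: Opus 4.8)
The plan is to exploit the two structural facts already available for \eqref{eqn:SIS}: the normalized dynamics keep $S+I$ invariant (equal to $1$), and by Theorem~\ref{th7} the endemic profile is geometric with a ratio $\phi\in(0,1)$. I would first recover $\cR_0$ from the macroscopic equilibrium, then recover $\phi$ from the microscopic one, and finally invert the two resulting scalar relations to express $\beta$ and $\gamma$ in terms of the measured quantities $I^*$, $I_1^*$ and the known $\mu$.

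First I would write the macroscopic equilibrium of \eqref{eqn:SIS}. Summing \eqref{eq1b} over $i\geq1$ and setting the derivative to zero gives $\beta S^*=\gamma+\mu$ at the nonzero equilibrium, that is $S^*=1/\cR_0$ with $\cR_0=\beta/(\gamma+\mu)$ (the value furnished by Theorem~\ref{th7} in the limit $\nu=0$, $\sigma\to+\infty$). Since the conservation $S^*+I^*=1$ is inherited from the invariance of $\Gamma$, one obtains $I^*=1-1/\cR_0$, hence $\cR_0=1/(1-I^*)$, which is the first claimed identity and shows $\cR_0$ is read off directly from $I^*$.

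Next I would determine $\phi$. Taking $\dot S_i=\dot I_i=0$ in \eqref{eqn:SIS} for $i\geq2$ and eliminating $S_i^*$ yields the recursion $I_i^*=\frac{\beta\gamma I^*}{(\gamma+\mu)(\beta I^*+\mu)}\,I_{i-1}^*$, so the common ratio is $\phi=\frac{\beta\gamma I^*}{(\gamma+\mu)(\beta I^*+\mu)}$ (consistent with the $\omega\to+\infty$ limit of Corollary~\ref{Cor1}). Because $I_i^*=\phi^{i-1}I_1^*$, summing the geometric series gives $I^*=I_1^*/(1-\phi)$, whence $\theta=I_1^*/I^*=1-\phi$; thus $\phi$ is known from the data. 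Using $S^*=1/\cR_0$ together with $\cR_0 I^*=\cR_0-1$, this expression for $\phi$ simplifies to a second scalar relation linking $\beta$ and $\gamma$, namely $\phi(\beta-\gamma)=\gamma(\cR_0-1)$.

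Finally I would solve the two relations $\beta=\cR_0(\gamma+\mu)$ and $\phi(\beta-\gamma)=\gamma(\cR_0-1)$, in which $\cR_0$ and $\phi=1-\theta$ are now known, for the unknowns $\beta$ and $\gamma$. After clearing denominators this is a linear $2\times2$ system, whose determinant does not vanish when $\cR_0>1$ and $0<\theta<1$; it therefore determines $\beta$ and $\gamma$ uniquely, which is exactly the identifiability asserted, and an elementary computation then produces their closed forms. The only genuine subtlety, and the main point to verify carefully, will be the legitimacy of the singular limits $\sigma,\omega\to+\infty$: one must confirm that the equilibrium structure of Theorem~\ref{th7} and Corollary~\ref{Cor1} (existence and positivity of the endemic equilibrium, and the geometric profile with ratio $0<\phi<1$) transfers verbatim to \eqref{eqn:SIS}. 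Once this is granted, the remaining steps are routine algebra.
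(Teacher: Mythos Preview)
Your approach is correct and is essentially what the paper has in mind: it merely says the proof ``based on formulas given in Theorem~\ref{th9} presents no difficulty and is left to the reader,'' and your derivation---read $\cR_0$ from $S^*+I^*=1$ and $\beta S^*=\gamma+\mu$, then read $\phi$ (hence $\theta=1-\phi$) from the geometric profile $I_i^*=\phi^{i-1}I_1^*$, and invert the resulting $2\times2$ linear system---is exactly that computation. One remark: your closing caveat about justifying the singular limits $\sigma,\omega\to+\infty$ is unnecessary here, since the paper states that the results above carry over to \eqref{eqn:SIS} and, more to the point, the equilibrium identities you use (including the recursion $I_i^*=\dfrac{\beta\gamma I^*}{(\gamma+\mu)(\beta I^*+\mu)}I_{i-1}^*$) can be verified directly from the stationary equations of \eqref{eqn:SIS} without any limiting argument.
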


The demonstration, based on formulas given in Theorem~\ref{th9} presents no difficulty and is left to the reader.
Theorem~\ref{th4} provides a way to identify the coefficients $\beta$ and $\gamma$ of the system, when $\mu$ and the limit quantities $I_1^*$ and $I^*$ are measured.
Clearly, the knowledge of the two measurements brings more information than any of them does alone.

\begin{Rem}
The ratio $\theta$ is the proportion of primo-infected in the total infected population at endemic equilibrium.
It is as well the proportion of susceptible individuals never previously infected in the total susceptible population, see Remark {\rm\ref{re5}}.
\hfill $\square$
\end{Rem}

Theorem \ref{th4} 
 suggests to study the observability and identifiability properties 
 of the system.
To tackle this point, we study the following 4-dimensional system, obtained as subsystem of \eqref{eqn:SIS} (see the comment following Theorem \ref{th6}):
\begin{subequations}
\label{eq101}
\begin{gather}
\label{eq101a}
\dot S = \mu -\beta SI- \mu  S + \gamma I,\quad
\dot I =\beta SI- (\mu +\gamma)I,\\
\label{eq101b}
\dot S_1 = \mu -\beta S_1I- \mu  S_1,\quad
\dot I_1 =\beta S_1I- (\mu +\gamma)I_1,\\
\label{eq101c}
y := \alpha I,\qquad y_1 := \alpha I_1.
\end{gather}
\end{subequations}
We assume through \eqref{eq101c} that the measurements of a portion $y$ of the infected individuals $I$ is available (as done  e.g.~in \cite{Evans:2005wj}), as well as of a portion $y_1$ of the primo-infected $I_1$, {\em with the same proportion $\alpha$}.
The coefficient $\alpha$ lies in $(0,1]$.
As before, the mortality rate $\mu$ is supposed known, as well as the total population size, taken to 1 for simplicity.
The parameters $\alpha,\beta,\gamma$ are unknown.
\begin{Thm}
\label{th18}
When the measurement $y$ is available, then system \eqref{eq101} is neither observable, nor identifiable.
When both measurements $y$ and $y_1$ are available, then system \eqref{eq101} is both observable and identifiable.
\hfill $\square$
\end{Thm}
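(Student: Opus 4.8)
The plan is to exploit the conservation $S+I=1$ (the total population being normalized to $1$), which turns the $(S,I)$ block of \eqref{eq101} into a scalar logistic equation for $I$ that drives, but is not driven by, the $(S_1,I_1)$ block. Substituting $S=1-I$ and differentiating $y=\alpha I$ gives $\dot y = p\,y - q\,y^2$ with $p:=\beta-(\mu+\gamma)$ and $q:=\beta/\alpha$. This normal form is the backbone of both halves of the statement.

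For the case where only $y$ is available, observability fails for a structural reason: the states $S_1,I_1$ occur neither in the $(S,I)$-dynamics nor in $y$, so two initial conditions differing only in $(S_1,I_1)$ produce exactly the same output, and the map from state to output is not injective. Identifiability fails because the output determines the coefficients $p$ and $q$ and nothing more — reading off $\dot y/y = p - q\,y$ along the transient recovers $(p,q)$, hence (as $\mu$ is known) only the two combinations $\beta-\gamma$ and $\beta/\alpha$. I would then exhibit explicitly the one-parameter family obtained by fixing $(p,q)$ and letting $\alpha$ vary over a range: $\beta=q\alpha$, $\gamma=q\alpha-\mu-p$, with the initial state rescaled as $I(0)\mapsto(\alpha/\alpha')I(0)$ so that $y(0)=\alpha I(0)$, and therefore $y(\cdot)$, is unchanged. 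Distinct admissible triples $(\alpha,\beta,\gamma)$ thus yield identical outputs, so the system is not identifiable.

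When both $y$ and $y_1$ are measured, the extra output breaks the $\beta/\alpha$ degeneracy. First I recover $(p,q)$ from $y$ as above. Next, since $y_1/y=I_1/I$ the factor $\alpha$ cancels, and the measured asymptotic ratio is exactly $\theta:=I_1^*/I^*$, which is output-determined because the solution converges to the endemic equilibrium by Theorem~\ref{th7}. Computing this equilibrium ratio from \eqref{eq101}, using $S_1^*=\mu/(\beta I^*+\mu)$ and the identity $\beta I^*+\mu=\beta-\gamma$, yields the clean relation $\theta=\frac{\mu\beta}{(\mu+\gamma)(\beta-\gamma)}$. Rewriting the denominator through $\mu+\gamma=\beta-p$ and $\beta-\gamma=\mu+p$ makes this equation affine in $\beta$, giving $\beta=\frac{\theta p(\mu+p)}{\theta(\mu+p)-\mu}$, after which $\gamma=\beta-\mu-p$ and $\alpha=\beta/q$ are determined; the three parameters are thus identifiable. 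Observability then follows: with the parameters known one sets $I=y/\alpha$, $S=1-I$, $I_1=y_1/\alpha$, and recovers the last state from the $I_1$-equation as $S_1=\bigl(\dot I_1+(\mu+\gamma)I_1\bigr)/(\beta I)$, so the full state is reconstructed.

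The main obstacle is the identifiability step for the two-output case: one must verify that $\theta$ contributes a relation genuinely independent of $(p,q)$ and that the resulting linear solve is nondegenerate. The crux is the equilibrium computation $\theta=\frac{\mu\beta}{(\mu+\gamma)(\beta-\gamma)}$ (in the spirit of Theorem~\ref{th4}) together with the check that the denominator $\theta(\mu+p)-\mu$ does not vanish; indeed it equals $\mu(\cR_0-1)$, which is positive precisely in the endemic regime $\cR_0>1$, guaranteeing a unique positive solution $(\alpha,\beta,\gamma)$.
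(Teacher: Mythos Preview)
Your treatment of the single-output case is essentially the paper's: both reduce to the scalar logistic form $\dot y = p y - q y^2$ and conclude that only $p=\beta-(\mu+\gamma)$ and $q=\beta/\alpha$ are recoverable. Your non-observability argument (that $(S_1,I_1)$ never feed back into $(S,I)$ or $y$) is even cleaner than the paper's, which argues instead that $I=y/\alpha$ cannot be reconstructed since $\alpha$ is not identifiable.

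For the two-output case, however, your route is \emph{genuinely different}. The paper proceeds by a differential-algebraic elimination: setting $w:=\beta S_1$, it writes $\dot y_1 = wy-(\mu+\gamma)y_1$ and $\ddot y_1$ in terms of $w$ and $\dot w$, uses the $S_1$-equation to eliminate $\dot w$, and obtains a $2\times 2$ linear system in $(\mu+\gamma,w)$ whose solution, compared with the expression $\mu+\gamma=\beta-p$ already known from $y$, yields an identity affine in $\beta$. This gives $\beta$ (hence $\alpha,\gamma$) from finite-time output data and its derivatives, without any hypothesis on $\cR_0$. Your argument instead passes to the limit $t\to+\infty$, reads off $\theta=y_1^*/y^*=I_1^*/I^*$, and inverts the equilibrium relation $\theta=\mu\beta/\bigl((\mu+\gamma)(\beta-\gamma)\bigr)$. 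This is correct and pleasantly short, and it ties in nicely with Theorem~\ref{th4}; but it buys simplicity at the price of two restrictions the paper's proof does not carry: it requires $\cR_0>1$ (otherwise there is no endemic equilibrium and your nondegeneracy check $\theta(\mu+p)-\mu=\mu(\cR_0-1)>0$ fails), and it needs the output on an infinite horizon. The paper's differential approach establishes \emph{structural} identifiability from transient data, valid for generic parameters and initial conditions. Your observability argument in the two-output case (reconstructing $S_1$ from $\dot I_1$) is the same as the paper's, once the parameters are in hand.
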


\begin{proof}
As $\nu=0$, we assume for simplicity $N\equiv 1$.

\noindent $\bullet$
{\bf Measuring only $y$.}
Consider first the use of the measurement $y$ on the macroscopic variables.
Due to the fact that $S+I\equiv 1$, the two formulas in \eqref{eq101a} provide indeed the same equation, namely:
$\dot I = ( \beta - (\mu+\gamma) - \beta I)I$.
Therefore, 
\begin{equation}
\label{eq103}
\dot y = \left(
\beta - (\mu+\gamma) - \frac{\beta}{\alpha} y
\right) y.
\end{equation}
By differentiation one gets that
\begin{equation*}
\label{eq110}
\frac{d}{dt} \left(
\frac{\dot y}{y}
\right) = -\frac{\beta}{\alpha} \dot y,
\end{equation*}
and one may express the two quantities $\frac{\beta}{\alpha}$ and $\beta-\gamma$, as
\begin{subequations}
\label{eq104}
\begin{equation}
\label{eq104a}
\frac{\beta}{\alpha}
= - \frac{1}{\dot y} \frac{d}{dt} \left(
\frac{\dot y}{y}
\right)
= - \frac{1}{\dot y} \frac{d^2}{dt^2} (\ln y),
\end{equation}
\vspace{-.5cm}
\begin{multline}
\label{eq104b}
\beta - \gamma
= \frac{\dot y}{y} + \mu + \frac{\beta}{\alpha} y
= \frac{\dot y}{y} + \mu - \frac{y}{\dot y} \frac{d^2}{dt^2} (\ln y)\\
= \mu + \frac{d}{dt} (\ln y) -  \frac{d}{dt} \left(
\ln \left|
\frac{d}{dt} (\ln y)
\right| \right).
\end{multline}
\end{subequations}
These two quantities are thus identifiable, but this is not sufficient to obtain each of the three coefficients $\alpha, \beta,\gamma$.
On the other hand, it is clear that nothing more may be learned when measuring only $y$, which fulfils equation \eqref{eq103}.
Therefore, the system \eqref{eq101a} 
is {\em not} identifiable over $\alpha, \beta,\gamma$.

Also, notice that it is not possible to determine $I = \frac{1}{\alpha}y$, otherwise $\alpha$ would be identifiable, and all other parameters too.
Thus the system is not observable.

\noindent $\bullet$
{\bf Measuring $y$ and $y_1$.}

We will now use \eqref{eq101b} and exploit the knowledge of the supplementary measured output $y_1$.
From the definition of $y_1$ and the second formula in \eqref{eq101b} one deduces, putting $w := \beta S_1$, that
\begin{equation}
\label{eq200}
\dot y_1 = (\beta S_1) y - (\mu+\gamma)y_1 = w y - (\mu+\gamma)y_1,
\end{equation}
and thus:
\begin{equation}
\label{eq201}
\ddot y_1 = \dot w y + w\dot y - (\mu+\gamma) \dot y_1.
\end{equation}

On the other hand, using now the first formula in \eqref{eq101b} and replacing $\frac{\beta}{\alpha}$ by its value obtained from \eqref{eq104a} yields
\begin{multline}
\label{eq202}
\dot w
= \beta \dot S_1
= \beta (\mu -\beta S_1I- \mu  S_1)
= \beta \mu - \frac{\beta}{\alpha} w y - \mu  w\\
= \beta \mu + \left(
\frac{d}{dt} \left(
\ln \left|
\frac{d}{dt} (\ln y)
\right| \right)
 - \mu
 \right)  w.
\end{multline}

Inserting in \eqref{eq201} the value of $\dot w$ extracted by \eqref{eq202}, we get
\[
\ddot y_1 = \left(
\beta \mu + \left(
\frac{d}{dt} \left(
\ln \left|
\frac{d}{dt} (\ln y)
\right| \right)
 - \mu
 \right)  w \right) y + w\dot y - (\mu+\gamma) \dot y_1,
\]
and gathering the terms in $w$ leads to the following equivalent form:
\begin{equation}
\label{eq203}
\ddot y_1
=
\beta \mu y + \left(
\dot y  + y\frac{d}{dt} \left(
\ln \left|
\frac{d}{dt} (\ln y)
\right| \right) - \mu y
\right) w
- (\mu+\gamma) \dot y_1.
\end{equation}

For clarity, let us write \eqref{eq200} and \eqref{eq203} under matrix form:
\begin{multline*}
\begin{pmatrix}
-y_1 & y\\
- \dot y_1 & \dot y  + y\frac{d}{dt} \left(
\ln \left|
\frac{d}{dt} (\ln y)
\right| \right) - \mu y
\end{pmatrix}
\begin{pmatrix}
\mu+\gamma \\ w
\end{pmatrix}\\
= \begin{pmatrix}
\dot y_1 \\ \ddot y_1 - \beta \mu y
\end{pmatrix}.
\end{multline*}
One deduces by partial inversion of the matrix that
\begin{equation}
\label{eq300}
\mu+\gamma
= \frac{\begin{vmatrix}
\dot y_1 & y\\
\ddot y_1 - \beta \mu y & \dot y  + y\frac{d}{dt} \left(
\ln \left|
\frac{d}{dt} (\ln y)
\right| \right) - \mu y
\end{vmatrix}}{\begin{vmatrix}
-y_1 & y\\
- \dot y_1 & \dot y  + y\frac{d}{dt} \left(
\ln \left|
\frac{d}{dt} (\ln y)
\right| \right) - \mu y
\end{vmatrix}}.
\end{equation}

On the other hand, one has, by use of \eqref{eq104b}, an alternative expression of $\mu+\gamma$, namely
\begin{equation}
\label{eq301}
\mu+\gamma
= \beta - \frac{d}{dt} (\ln y) + \frac{d}{dt} \left(
\ln \left|
\frac{d}{dt} (\ln y)
\right| \right).
\end{equation}
Achieving elimination of $\mu+\gamma$ between \eqref{eq300} and \eqref{eq301} yields
\begin{multline*}
\beta - \frac{d}{dt} (\ln y) + \frac{d}{dt} \left(
\ln \left|
\frac{d}{dt} (\ln y)
\right| \right)\\
=
\frac{\begin{vmatrix}
\dot y_1 & y\\
\ddot y_1 - \beta \mu y & \dot y  + y\frac{d}{dt} \left(
\ln \left|
\frac{d}{dt} (\ln y)
\right| \right) - \mu y
\end{vmatrix}}{\begin{vmatrix}
-y_1 & y\\
- \dot y_1 & \dot y  + y\frac{d}{dt} \left(
\ln \left|
\frac{d}{dt} (\ln y)
\right| \right) - \mu y
\end{vmatrix}}.
\end{multline*}
Now, one checks that the previous identity is indeed {\em affine in $\beta$}.
Factorizing all terms in $\beta$, it may be written
\begin{multline}
\label{eq576}
\left(
- \left(
\dot y  + y\frac{d}{dt} \left(
\ln \left|
\frac{d}{dt} (\ln y)
\right| \right) - \mu y
\right) y_1+ y \dot y_1  - \mu y^2
\right) \beta\\
= \Phi(y,\dot y,\ddot y,y_1,\dot y_1,\ddot y_1),
\end{multline}
where the function $\Phi$ depends upon the two outputs $y$ and $y_1$ and their derivatives up to the second one.
In particular, $\Phi$ does not contain any occurrence of the unknown coefficients $\alpha,\beta,\gamma$.
One deduces that $\beta$ may be identified, and thus also $\alpha, \gamma$ with the help of \eqref{eq104}.
Therefore in these conditions, system \eqref{eq101} is identifiable\footnote{Typically, the numbers of infected $I(t)$  and primo-infected $I_1(t)$ are almost identical for small values of time $t$ (see e.g.~Figure \ref{fi5}).
Putting $y_1 \sim y$  and $\dot y_1 \sim \dot y$ in the factor in the left-hand size of \eqref{eq576} gives the value $-y^2\frac{d}{dt} \left(
\ln \left|
\frac{d}{dt} (\ln y)
\right| \right) = -y^2 \frac{d}{dt} \left( \frac{\dot y}{y} \right) = -y^2 \frac{d}{dt} \ln \left(
\beta - (\mu+\gamma) - \frac{\beta}{\alpha} y
\right)$ along the trajectories of \eqref{eq103}, which generically is nonzero.}.

Once these parameters have been identified, one has $I = \frac{1}{\alpha} y$ and $S=1-I$, while on the other hand, $I_1 = \frac{1}{\alpha} y_1$ and $S_1 = \frac{1}{\beta} w$, where (see \eqref{eq200}) $w$ is given by
\begin{equation*}
w 
= \frac{y_1}{y} \left(
\frac{d}{dt} (\ln y_1) + \mu+\gamma
\right).
\end{equation*}
The system \eqref{eq101} is thus observable.
\end{proof}

 \addtolength{\textheight}{-8cm}   

\section{CONCLUSIONS}
\label{se8}

We proposed in this article a SEIRS model with an infinite set of differential equations, allowing to enumerate the number of reinfections.
The well-posedness of this system has been established in an appropriate functional setting, and the asymptotic convergence to either the disease-free equilibrium (when the basic reproduction number $\cR_0$ is smaller than 1) or the endemic equilibrium (when $\cR_0>1$) has been shown.
We also provided in the latter case several formulas related to mean numbers of reinfections.
Last, we have shown that the joint measurement of the number of infected and of primo-infected is sufficient to render observable and identifiable a system that is not when only the infected are measured.
This result demonstrates the interest of the reinfection data for analyzing the communicable diseases. 
Based on this first step, further research will now consider the key issues of observation and identification.





\bibliographystyle{plain}
\bibliography{BIBLIO.bib}

\end{document}